\documentclass[11pt]{article}

\usepackage[a4paper, left=3.2cm,top=3cm,right=3.2cm,bottom=3.5cm]{geometry}

\usepackage{concmath}
\usepackage[T1]{fontenc}
\usepackage[utf8]{inputenc}

\usepackage{authblk}
\setcounter{Maxaffil}{5}

\setlength{\parskip}{0.5em} \setlength{\parindent}{0em}

\usepackage{cite}
\usepackage{tabularx}

\usepackage{amsmath,amsthm,amssymb}
\usepackage{dsfont}
\usepackage{mathtools}
\usepackage{subcaption}
\usepackage[color=green!40]{todonotes}
\usepackage{graphicx}
\usepackage{tikz}
\usepackage{bbm}
\usepackage{enumitem}
\usepackage{tikz}
\usepackage{booktabs}
\usepackage{framed}

\usepackage[binary-units]{siunitx}
\usepackage{eurosym}

\usepackage{todonotes}

\newcommand*{\N}{\mathds{N}}

\newcommand*{\R}{\mathds{R}}

\newcommand{\edot}{\, \cdot \, }

\newcommand{\nett}{\mathcal{N}}
\newcommand{\anett}{\mathcal{A}}
\newcommand\pen{\mathcal Q}
\newcommand{\reg}{\mathcal R}

\newcommand{\aval}{c}

\newcommand{\M}{\mathcal M}

\newcommand{\Ro}{\mathbf{R}}
\newcommand{\Ko}{\mathbf{K}}

\newcommand{\Uo}{\mathbf U}

\newcommand{\X}{\mathbb X}
\newcommand{\Y}{\mathbb Y}

\newcommand{\similarity}{\mathcal{D}}

\newcommand{\al}{\alpha}

\newcommand{\encoder}{\mathbf E}
\newcommand{\decoder}{\mathbf D}
\newcommand{\auto}{\mathbf N}

\newcommand{\signal}{x}
\newcommand{\data}{y}

\DeclarePairedDelimiter{\abs}{\lvert}{\rvert}
\DeclarePairedDelimiter{\norm}{\lVert}{\rVert}
\DeclarePairedDelimiter{\innerprod}{\langle}{\rangle}

\DeclareMathOperator{\dom}{dom}

\DeclareMathOperator*{\argmin}{arg\,min}

\DeclareGraphicsExtensions{.eps,.pdf,.png,.jpg}

\newtheorem{theorem}{Theorem}

\theoremstyle{definition}

\newtheorem{proposition}[theorem]{Proposition}
\newtheorem{example}[theorem]{Example}
\newtheorem{cond}[theorem]{Condition}

\newtheorem{definition}[theorem]{Definiton}

\usepackage{soul}
\colorlet{lred}{red!40}
\colorlet{lgreen}{green!40}
\colorlet{lblue}{blue!40}
\definecolor{bananamania}{rgb}{0.98, 0.91, 0.71}

\numberwithin{equation}{section}
\numberwithin{table}{section}
\numberwithin{figure}{section}
\numberwithin{theorem}{section}

\allowdisplaybreaks

\author[1]{Daniel Obmann}
\author[2]{Linh Nguyen}
\author[1]{Johannes Schwab}
\author[1]{Markus Haltmeier}

\affil[1]{Department of Mathematics, University of Innsbruck\authorcr
Technikerstrasse 13, 6020 Innsbruck, Austria\authorcr
 {\tt \{daniel.obmann,johannes.schwab,markus.haltmeier\}@uibk.ac.at}\authorcr\mbox{}}

\affil[2]{Department of Mathematics, University of Idaho\authorcr
Moscow, ID 83844, {\tt lnguyen@uidaho.edu}
 }

\title{Augmented NETT Regularization of Inverse Problems}

\date{February 06, 2021}

\begin{document}
\maketitle
\begin{abstract}
We propose aNETT (augmented NETwork Tikhonov) regularization as a novel data-driven reconstruction framework for solving inverse problems. An encoder-decoder type network defines a regularizer consisting of a penalty term that enforces regularity in the encoder domain, augmented by a penalty that penalizes the distance to the data manifold. We present a rigorous  convergence analysis including stability estimates and convergence rates. For that purpose, we prove the coercivity of the regularizer used without requiring  explicit coercivity assumptions for the networks involved. We propose a possible realization together with a network architecture and a modular training strategy.  Applications to sparse-view and low-dose CT show that aNETT achieves results comparable to state-of-the-art deep-learning-based reconstruction methods. Unlike learned iterative methods, aNETT does not require repeated application of the forward and adjoint models, which enables the use of aNETT for inverse problems with numerically expensive forward models. Furthermore, we show that aNETT trained on coarsely sampled data can leverage an increased sampling rate without the need for retraining. 

\medskip

\noindent \textbf{Keywords:} inverse problems, regularization, stability guarantees, convergence rates, learned regularizer, computed tomography, neural networks         
\end{abstract}

\section{Introduction} \label{sec:introduction}

Various  applications in medical imaging, remote sensing and elsewhere require solving  inverse problems of the form
\begin{equation}\label{eq:ip}
\data^\delta   = \Ko \signal + \eta^\delta \,,
\end{equation}
where $\Ko \colon \X \to \Y$ is an operator between Hilbert spaces modeling the forward problem, $\eta^\delta$ is the data perturbation, $\data^\delta \in \Y$ is the noisy data and $\signal \in \X$ is the sought for signal. Inverse problems are well analyzed and several established approaches for its stable solution exist \cite{EngHanNeu96, scherzer2009variational}. Recently, neural networks and deep learning appeared as new paradigms for solving inverse problems  \cite{arridge2019solving,haltmeier2020regularization,mccann2017convolutional,maier2019gentle,wang2016perspective}. Several approaches based on deep learning  have been developed, including post-processing networks \cite{lee2017deep, jin2017deep, antholzer2018deep,maier2019learning,rivenson2018phase}, regularizing null-space networks \cite{schwab2019deep,schwab2020big}, plug-and-play priors \cite{venkatakrishnan2013plug,chan2016plug,romano2017little}, deep image priors \cite{dittmer2020regularization,ulyanov2018deep}, variational networks \cite{kobler2017variational,hammernik2018learning}, network cascades \cite{schlemper2017deep,kofler2018u}, learned iterative schemes \cite{adler2017solving,aggarwal2018modl,chang2017one,hauptmann2020multi,kofler2021end,ADMMnet} and learned regularizers \cite{antholzer2020discretization,li2020nett,lunz2018adversarial,mukherjee2020learned}.

Classical deep learning approaches may lack data consistency for unknowns very different from the training data. To address this issue, in \cite{li2020nett} a deep learning approach named NETT (NETwork Tikhonov) regularization has  been introduced  which considers minimizers of  the NETT functional
\begin{equation}\label{eq:nett}
 \nett_{\alpha, \data^\delta}(\signal)  \coloneqq  \similarity(\Ko \signal, \data^\delta)    +  \alpha \pen( \encoder ( \signal ) ) \,.
\end{equation}
Here, $\similarity$ is a similarity measure, $ \encoder \colon \X \rightarrow \Xi$ is a trained neural network, $\pen \colon \Xi \rightarrow [0, \infty]$ a functional and $\alpha > 0$ a regularization parameter.  In \cite{li2020nett} it is shown that under suitable assumptions, NETT yields a convergent regularization method. This in particular includes provable stability guarantees and error estimates. Moreover, a training strategy has been proposed, where $\encoder$ is trained such that $\pen \circ \encoder$ favors artifact-free reconstructions over reconstructions with artifacts.

\subsection{The augmented NETT} \label{subsec:coerciveNETT}

One of the main assumptions for the analysis of \cite{li2020nett}  is the coercivity of the regularizer $\pen \circ \encoder$ which requires special care in network design and training. In order to overcome this limitation, we propose an augmented form of the  regularizer for which we are able to rigorously prove  coercivity.  More precisely, for fixed $c>0$, we consider minimizers $\signal_\alpha^\delta$ of the augmented NETT functional
\begin{equation} \label{eq:anett}
\anett_{\alpha, \data^\delta}(\signal)
 \coloneqq  \similarity(\Ko \signal, \data^\delta)
+ \alpha \left( \pen(\encoder(\signal)) + \frac{\aval}{2} \norm{\signal - (\decoder \circ \encoder)(\signal)}_2^2 \right).
\end{equation}
Here, $\similarity \colon \Y \times \Y \rightarrow [0, \infty]$ is a similarity measure and $\decoder \circ \encoder \colon \X \rightarrow \X$ is an encoder-decoder network trained such that for any signal $\signal$ on a signal manifold we have $(\decoder \circ \encoder)(\signal) \simeq \signal$ and that $\pen(\encoder(\signal))$ is small. We term this approach augmented NETT (aNETT) regularization.  In this work we provide a mathematical  convergence analysis for aNETT,  present a novel modular training strategy and investigate its practical performance.

The term $\pen(\encoder(\signal))$ implements learned prior knowledge on the encoder coefficients, while smallness of  $\norm{\signal - (\decoder \circ \encoder)(\signal)}_2^2$ forces $\signal$ to be close to the signal manifold. The latter term also  guarantees the coercivity of  \eqref{eq:anett}.  In the original NETT version \eqref{eq:nett}, coercivity of the regularizer  requires coercivity conditions on the network involved. Indeed, in the numerical experiments, the authors of \cite{li2020nett} observed a semi-convergence behaviour when minimizing \eqref{eq:nett}, so early stopping of the iterative minimization scheme has been used as additional regularization. We  attribute this semi-convergence behavior to the non-coercivity of the regularization term.  In the present, paper we address this issue systematically by augmentation of the NETT functional which guarantees coercivity and allows a more stable minimization. Coercivity is also one main ingredient for the mathematical convergence analysis.

An interesting practical instance of aNETT takes  $\pen$ as a weighted $\ell^q$-norm enforcing sparsity of the encoding coefficients \cite{daubechies2014sparsity,grasmair2008sparse}. An important example for the  similarity measure is given by the squared norm distance, which from  a statistical viewpoint can be motivated by a Gaussian white noise model. General similarity measures  allow us to adapt to different noise models which can be more appropriate for certain problems.

\subsection{Main contributions}

The contributions of this paper are threefold.  As described in more detail below, we introduce the aNETT framework, mathematically analyze its convergence, and propose a practical implementation that is applied to tomographic limited data problems.                  

\begin{itemize}[ topsep=0em, itemsep=0em]
\item The first  contribution is to introduce the structure of the aNETT regularizer $\reg (\signal) = \pen(\encoder(\signal)) +  (\aval / 2) \, \norm{\signal - (\decoder \circ \encoder)(\signal)}_2^2  $. The combination of the two terms in the regularizer, even in the case of a linear encoder seems to be new.  The term $\pen(\encoder(\signal))$   enforces regularity  of the analysis coefficients, which is an ingredient  in most of existing variational regularization techniques.  For example, this  includes sparse regularization in frames or dictionaries,  regularization with Sobolev norms or total variation regularization. On the other hand, the augmented term $\norm{\signal - (\decoder \circ \encoder) ( \signal)}_2^2$ penalized  distance  to the signal manifold. It is the combination of these two terms that results in a stable reconstruction scheme without the need of strong assumptions on the involved networks.

\item The second main contribution is the theoretical analysis of aNETT \eqref{eq:anett} in  the context of regularization theory.   We investigate the case where the image domain of the encoder is given by $\Xi = \ell^2(\Lambda)$ for some countable set $\Lambda$, and $\pen$ is a coercive functional measuring the complexity of the encoder  coefficients.  The presented  analysis is in the spirit of  the analysis of NETT given in \cite{li2020nett}. However,  opposed to NETT, the required coercivity property is derived naturally for the class of considered regularizers. This  supports the use of  the regularizer $\reg$ also from a theoretical side.  Moreover, the convergence rates results presented here uses assumptions significantly different  from \cite{li2020nett}.   While we present our analysis for the transform domain $\Xi = \ell^2(\Lambda)$ we could replace the encoder space by a general Hilbert or Banach space.

\item As a third main contribution we propose a modular  strategy for training $\decoder \circ \encoder$ together with a possible network architecture. First, independent  of the given inverse problem, we train a $\pen$-penalized autoencoder that  learns representing signals from the training data with low complexity. In the second step, we train a task-specific network which can be adapted to specific inverse  problem at hand. In our  numerical experiments, we empirically found this modular training strategy to be superior to directly adapting the  autoencoder to the inverse problem. For the $\pen$-penalized autoencoder, we train the  modified version described in \cite{obmann2021deep} of the  tight frame U-Net of \cite{han2018framing} in a way such that  $\pen$ poses additional constraints on the autoencoder during the training process.

\end{itemize}

\subsection{Outline}

In Section~\ref{sec:analysis} we present the mathematical convergence analysis of aNETT. In particular, as an auxiliary result, we establish the coercivity of the regularization term. Moreover, we  prove stability and derive convergence rates. Section~\ref{sec:realization}  presents practical aspects for aNETT.  We propose a possible architecture and training strategy for the networks, and a possible ADMM based scheme to obtain minimizers of the aNETT functional. In Section~\ref{sec:application},   we  present reconstruction results and compare aNETT with other deep learning based reconstruction methods. The paper concludes with a short summary and discussion.
Parts of this paper were presented at the ISBI 2020 conference and the corresponding proceedings~\cite{obmann2020sparse}. Opposed to the proceedings, this article treats a  general similarity measure $\similarity$ and considers a general complexity measure $\pen$. Further, all  proofs and all numerical results presented in this paper are new.

\section{Mathematical analysis} 
\label{sec:analysis}

In this section we prove the stability and convergence of aNETT as regularization method. Moreover, we derive convergence rates in the form of quantitative error estimates between exact solutions for noise-free data and aNETT regularized solutions for noisy data. 

\subsection{Assumptions and coercivity results}

For our convergence analysis we make use of the following assumptions on the underlying spaces and operators involved.
\begin{cond}[General assumptions for  aNETT] \label{cond:a} \hfill
\begin{enumerate}[label=(A\arabic*), leftmargin=3em, topsep=0em, itemsep=0em]
\item $\X$ and $\Y$ are Hilbert spaces.
\item $\Xi = \ell^2(\Lambda)$ for a countable $\Lambda$.
\item $\Ko \colon \X \rightarrow \Y$ is weakly sequentially continuous.
\item \label{a4} $\encoder \colon \X \rightarrow \Xi$ is weakly sequentially continuous.
\item \label{a5} $\decoder \colon \Xi \rightarrow \X$ is  weakly sequentially continuous.
\item  \label{a6} $\pen \colon \Xi \rightarrow [0, \infty]$ is coercive and weakly sequentially lower semi-continuous.
\end{enumerate}
\end{cond}
We set $\auto\coloneqq \decoder \circ \encoder$ and, for given $\aval > 0$, define
\begin{equation} \label{eq:areg}
	\reg \colon \X \to [0, \infty] \colon \signal \mapsto \pen(\encoder(\signal)) + \frac{\aval}{2} \norm{\signal - (\decoder \circ \encoder)(\signal)}_2^2 \,,
\end{equation}
which we refer to as the  aNETT (or augmented NETT) regularizer.
	
According to  \ref{a4}-\ref{a6}, the aNETT regularizer  is weakly sequentially  lower semi-continuous. As a main ingredient for our analysis we next prove its  coercivity.

\begin{theorem}[Coercivity of  the aNETT regularizer $\reg$] \label{thm:coercivity}
If Condition \ref{cond:a} holds, then  $\reg \colon \X \rightarrow [0, \infty]$ is coercive.
\end{theorem}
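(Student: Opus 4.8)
The plan is to prove coercivity by contradiction, exploiting that both summands of $\reg$ are nonnegative so that any bound on $\reg(\signal)$ bounds each term separately. Suppose $\reg$ were not coercive. Then there exist a constant $M < \infty$ and a sequence $(\signal_n)_n$ in $\X$ with $\norm{\signal_n} \to \infty$ while $\reg(\signal_n) \le M$ for all $n$. In particular $\pen(\encoder(\signal_n)) \le M$ and $\tfrac{\aval}{2}\norm{\signal_n - \auto(\signal_n)}_2^2 \le M$, so that the sequence $\bigl(\norm{\signal_n - \auto(\signal_n)}\bigr)_n$ is bounded.

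First I would use the coercivity of $\pen$ from \ref{a6} to control the encoder coefficients. Since $\pen(\encoder(\signal_n)) \le M$ and $\pen$ is coercive, the sequence $(\encoder(\signal_n))_n$ must be bounded in $\Xi = \ell^2(\Lambda)$: otherwise some subsequence would satisfy $\norm{\encoder(\signal_{n_k})} \to \infty$, forcing $\pen(\encoder(\signal_{n_k})) \to \infty$ and contradicting the uniform bound.

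The decisive step — and the one I expect to be the main obstacle — is to convert boundedness of $(\encoder(\signal_n))_n$ in $\Xi$ into boundedness of $(\auto(\signal_n))_n = (\decoder(\encoder(\signal_n)))_n$ in $\X$. Because $\decoder$ is assumed only weakly sequentially continuous (assumption \ref{a5}) and not norm-continuous or bounded, one cannot directly conclude that it maps bounded sets to bounded sets. Instead I would invoke the Hilbert space structure of $\Xi$: a bounded sequence admits a weakly convergent subsequence $\encoder(\signal_{n_k}) \rightharpoonup \xi$ for some $\xi \in \Xi$. Weak sequential continuity of $\decoder$ then yields $\auto(\signal_{n_k}) = \decoder(\encoder(\signal_{n_k})) \rightharpoonup \decoder(\xi)$ in $\X$, and since every weakly convergent sequence is norm-bounded (Banach--Steinhaus), $(\auto(\signal_{n_k}))_k$ is bounded.

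Finally I would combine the two bounds via the triangle inequality
\begin{equation*}
\norm{\signal_{n_k}} \le \norm{\signal_{n_k} - \auto(\signal_{n_k})} + \norm{\auto(\signal_{n_k})},
\end{equation*}
whose right-hand side is bounded along the subsequence. This contradicts $\norm{\signal_{n_k}} \to \infty$ and completes the argument. Note that the strategy uses neither coercivity nor boundedness of the networks $\encoder$ and $\decoder$ themselves, only weak sequential continuity of $\decoder$ together with coercivity of $\pen$ — precisely the point emphasized in the discussion preceding the theorem.
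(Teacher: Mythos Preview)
Your proof is correct and follows essentially the same route as the paper's: bound $\pen(\encoder(\signal_n))$ and $\norm{\signal_n - \auto(\signal_n)}$ separately, use coercivity of $\pen$ to bound $(\encoder(\signal_n))_n$, then use weak sequential continuity of $\decoder$ to deduce boundedness of $(\auto(\signal_n))_n$ and finish with the triangle inequality. The only difference is cosmetic --- you argue by contradiction and spell out the subsequence argument (bounded $\Rightarrow$ weakly convergent subsequence $\Rightarrow$ weakly convergent image $\Rightarrow$ bounded) that the paper compresses into the single sentence ``$\decoder$ is weakly sequentially continuous and thus $(\norm{\auto(\signal_n)})_{n\in \N}$ must be bounded.''
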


\begin{proof}
Let $(\signal_n)_{n\in \N}$ be some sequence in $\X$ such that $(\reg(\signal_n))_{n\in \N}$ is bounded. Then by definition of $\reg$ it follows that $(\pen(\encoder(\signal_n)))_{n\in \N}$ is bounded and by coercivity of $\pen$ we have that $(\encoder(\signal_n))_{n\in \N}$ is also bounded. By assumption, $\decoder$ is weakly sequentially continuous and thus $(\norm{\auto(\signal_n)})_{n\in \N}$ must be bounded.
Using that  $\aval > 0$, we obtain the inequality    $\norm{\signal_n}^2  \leq 2\norm{\signal_n - \auto(\signal_n)}^2 + 2\norm{\auto(\signal_n)}^2
 \leq (4 / \aval) \,  \reg(\signal_n) + 2 \norm{\auto(\signal_n)}^2$. This shows that $(\signal_n)_{n\in \N}$ is bounded and therefore that $\reg$ is coercive.
\end{proof}

\begin{example}[Sparse aNETT regularizer]
To obtain a sparsity promoting regularizer we can choose $\pen(\xi) = \lVert \xi  \rVert_{1; w}  \coloneqq \sum_{\lambda \in \Lambda} w_\lambda \abs{\xi_\lambda}^q$ where $q \in [1, 2]$ and $\inf_\lambda w_\lambda > 0$. Since $q \in [1,2]$ we have $\norm{\edot }^2 \leq  (\inf_\lambda w_\lambda)^{-1/q} \, \lVert \edot  \rVert_{1; w}$  and hence $ \lVert \edot  \rVert_{1; w}$ is coercive.
As a sum of weakly sequentially lower semi-continuous functionals it is also weakly sequentially lower semi-continuous \cite{grasmair2008sparse}.
Therefore Condition \ref{a6} is satisfied for the weighted  $\ell^q$-norm.
Together with Theorem~\ref{thm:coercivity}, we conclude that the resulting  weighted sparse aNETT regularizer $\signal \mapsto \lVert \encoder(\signal)  \rVert_{1; w}  + (\aval / 2 ) \,  \norm{\signal - (\decoder \circ \encoder)(\signal)}_2^2 $ is a coercive and weakly sequentially lower semi-continuous functional.
\end{example}

For the further analysis we will make the following assumptions including the similarity measure $\similarity \colon \Y \times \Y \rightarrow [0, \infty]$.

\begin{cond}[Similarity measure] \label{assumption:b} \mbox{}
\begin{enumerate}[label=(B\arabic*), leftmargin=3em, topsep=0em, itemsep=0em]
\item\label{b1} $\forall \data_0, \data_1 \colon \similarity(\data_0, \data_1) = 0 \Leftrightarrow \data_0 = \data_1$.
\item\label{b2} $\similarity$ is sequentially lower semi-continuous with respect to the weak topology in the first  and the  norm topology in the second argument.
\item\label{b3} $\forall (\data_n)_{n \in \N} \in \Y^\N \colon (\similarity(\data, \data_n) \rightarrow 0 \Rightarrow \data_n \rightarrow \data$ as $n \to \infty$).
\item\label{b4} \label{b5} $\similarity(\data, \data_n) \rightarrow 0$ as $n \to \infty$ $\Rightarrow$ ($\forall z \in \Y\colon \similarity(z, \data) < \infty \Rightarrow \similarity(z, \data_n) \rightarrow \similarity(z, \data)$).
\item\label{b5} $\forall  \data \in \Y \; \forall  \alpha > 0\colon$ $\exists \signal \in \X$ with $\similarity(\Ko \signal, \data) + \alpha \reg(\signal) < \infty$.
\end{enumerate}
\end{cond}

While \ref{b1}-\ref{b4} restrict the choice of the similarity measure, \ref{b5} is a technical assumption involving the  forward operator, the regularizer and the similarity measure, that is required for the existence of minimizers. For a more detailed discussion of these assumptions we refer to \cite{poschl2008tikhonov}.

\begin{example}[Similarity measures using the norm]
The classical example of a similarity measure satisfying   \ref{b1}-\ref{b4}  is given by $\similarity(\data_0, \data_1) = \norm{\data_0 - \data_1}^p$ for some $p \geq 1$ and more generally by $\similarity(\data_0, \data_1) = \psi(\norm{\data_0 - \data_1})$, where $\psi \colon [0, \infty) \rightarrow [0, \infty)$ is a continuous and monotonically increasing function that satisfies $\forall t \geq 0 \colon \psi(t) = 0 \Leftrightarrow t = 0$.
\end{example}

Taking into account Theorem \ref{thm:coercivity},  Conditions~\ref{cond:a} and \ref{assumption:b} imply that the aNETT functional $\anett_{\alpha, \data}$ defined by  \eqref{eq:anett}, \eqref{eq:areg}  is  proper, coercive and weakly sequentially  lower semi-continuous.  This in particular implies the existence of minimizers of  $\anett_{\alpha, \data}$ for all data $\data \in \Y$ and regularization parameters $\al >0$.

\subsection{Stability} \label{subsec:analysis}

Next we prove the stability of minimizing the aNETT functional $\anett_{\alpha, \data}$ regarding  perturbations of the data $\data$.

\begin{theorem}[Stability] \label{thm:stability}
Let Conditions~\ref{cond:a} and \ref{assumption:b} hold, $\data \in \Y$ and $\alpha > 0$. Moreover, let  $(\data_n)_{n\in \N} \in \Y^\N$ be a sequence of perturbed data with $\similarity(\data, \data_n) \rightarrow 0$ and consider minimizers $\signal_n \in \argmin \anett_{\alpha, \data_n}$. Then the sequence $(\signal_n)_{n\in \N}  \in \X^\N$ has at least one weak accumulation point and weak accumulation points are minimizers of $\anett_{\alpha, \data}$. Moreover, for any weak accumulation point $\signal^\ddagger$ of $(\signal_n)_{n\in \N} $  and any subsequence $(\signal_{\tau(n)})_{n \in \N}$  with $\signal_{\tau(n)} \rightharpoonup \signal^\ddagger$ we have $\reg (\signal_{\tau(n)}) \rightarrow \reg(\signal^\ddagger)$.
\end{theorem}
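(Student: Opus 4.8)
The plan is to follow the standard direct-method argument for stability of Tikhonov-type regularization, exploiting the coercivity established in Theorem~\ref{thm:coercivity} together with the weak lower semi-continuity of all the ingredients. The key observation to get started is that each minimizer $\signal_n$ can be compared against a fixed comparison element. Concretely, pick any $\signal^\ddagger$ with $\similarity(\Ko \signal^\ddagger, \data) < \infty$ and $\reg(\signal^\ddagger) < \infty$ (such an element exists by \ref{b5}); then by minimality of $\signal_n$ for $\anett_{\alpha, \data_n}$ we have
\begin{equation}
\similarity(\Ko \signal_n, \data_n) + \alpha \reg(\signal_n) \leq \similarity(\Ko \signal^\ddagger, \data_n) + \alpha \reg(\signal^\ddagger).
\end{equation}
Since $\similarity(\data, \data_n) \to 0$, assumption~\ref{b4} applied to $z = \Ko \signal^\ddagger$ gives $\similarity(\Ko \signal^\ddagger, \data_n) \to \similarity(\Ko \signal^\ddagger, \data)$, so the right-hand side is bounded uniformly in $n$. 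Hence both $\similarity(\Ko \signal_n, \data_n)$ and $\reg(\signal_n)$ are uniformly bounded.

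Boundedness of $\reg(\signal_n)$ together with Theorem~\ref{thm:coercivity} yields that $(\signal_n)_{n\in\N}$ is bounded in $\X$; since $\X$ is a Hilbert space, the sequence has a weakly convergent subsequence, which establishes the existence of a weak accumulation point $\signal^\ddagger$. For the second claim, I would fix an arbitrary weak accumulation point and a subsequence $\signal_{\tau(n)} \rightharpoonup \signal^\ddagger$, then pass to the limit in the minimality inequality. On the left, weak sequential continuity of $\Ko$ (A3) gives $\Ko \signal_{\tau(n)} \rightharpoonup \Ko \signal^\ddagger$, and combining \ref{b2} (weak lsc in the first argument) with the norm convergence $\data_n \to \data$ supplied by \ref{b3} handles the similarity term; the regularizer term is controlled by weak lsc of $\reg$. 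On the right I again use \ref{b4} so that $\similarity(\Ko \signal^\ddagger, \data_{\tau(n)}) \to \similarity(\Ko \signal^\ddagger, \data)$ for any competitor. This produces $\anett_{\alpha,\data}(\signal^\ddagger) \leq \anett_{\alpha,\data}(\signal^\ddagger)$ for every admissible $\signal^\ddagger$, i.e.\ $\signal^\ddagger$ minimizes $\anett_{\alpha,\data}$.

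For the final convergence $\reg(\signal_{\tau(n)}) \to \reg(\signal^\ddagger)$ I would use the standard sandwiching trick. Weak lsc already gives $\reg(\signal^\ddagger) \leq \liminf_n \reg(\signal_{\tau(n)})$. For the reverse, evaluate the minimality inequality at the limit minimizer $\signal^\ddagger$, rearrange to isolate $\alpha \reg(\signal_{\tau(n)})$, and take $\limsup$: the data-fidelity terms converge appropriately (using \ref{b3} and \ref{b4}), and one obtains $\limsup_n \reg(\signal_{\tau(n)}) \leq \reg(\signal^\ddagger)$ after cancelling the limit of $\similarity(\Ko\signal_{\tau(n)},\data_{\tau(n)})$ against $\similarity(\Ko\signal^\ddagger,\data)$ — here one needs that $\similarity(\Ko\signal_{\tau(n)},\data_{\tau(n)}) \to \similarity(\Ko\signal^\ddagger,\data)$, which follows by squeezing the lsc lower bound against the upper bound from minimality.

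The main obstacle I anticipate is the careful bookkeeping in this last step: the similarity measure is only lower semi-continuous, not continuous, so one cannot simply pass limits inside $\similarity$ at $\signal_{\tau(n)}$. The trick is to never require continuity of $\similarity$ at the moving argument $\signal_{\tau(n)}$, but only continuity in the \emph{data} slot at the fixed competitor $\signal^\ddagger$ (guaranteed by \ref{b4}), and to use lower semi-continuity to pin down the fidelity limit from below while minimality pins it from above. Getting these two one-sided estimates to meet — and making sure the perturbation convergence $\similarity(\data,\data_n)\to 0$ is translated into genuine norm convergence $\data_n \to \data$ via \ref{b3} before invoking \ref{b2} — is the delicate part of the argument.
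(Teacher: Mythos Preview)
Your proposal is correct and follows essentially the same route as the paper's proof: a comparison element via \ref{b5} plus \ref{b4} to bound $(\reg(\signal_n))_n$, coercivity from Theorem~\ref{thm:coercivity} for weak compactness, then the chain $\liminf$ (via \ref{b2}, \ref{b3}) $\leq \limsup$ (via minimality and \ref{b4}) to identify the limit as a minimizer, and finally the sandwich $\limsup \alpha \reg(\signal_{\tau(n)}) \leq \limsup \anett_{\alpha,\data_{\tau(n)}}(\signal_{\tau(n)}) - \liminf \similarity(\Ko \signal_{\tau(n)}, \data_{\tau(n)})$ for the convergence of $\reg$. The only cosmetic issues are the overloaded symbol $\signal^\ddagger$ (used both for the initial comparison element and for the weak limit) and the unnecessary detour through full convergence of $\similarity(\Ko\signal_{\tau(n)},\data_{\tau(n)})$ in the last step---the $\liminf$ bound from \ref{b2} already suffices there.
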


\begin{proof}
Let $\signal_\star \in \X$ be such that $\anett_{\alpha, \data}(\signal_\star) < \infty$. By definition of $\signal_n$ we have $
\anett_{\alpha, \data_n}(\signal_n) \leq \anett_{\alpha, \data_n}(\signal_\star)$.
Since by assumption $\similarity(\data, \data_n) \rightarrow 0$ and $\similarity(\Ko \signal_\star, \data) < \infty$, we have $\similarity(\Ko \signal_\star, \data_n) \rightarrow \similarity(\Ko \signal_\star, \data)$. This implies that $\anett_{\alpha, \data_n}(\signal_\star)$ is bounded by some positive constant $m_\star$ for sufficiently large  $n$. By definition of $\anett_{\alpha, \data_n}$ we have
$\alpha \reg(\signal_n) \leq \anett_{\alpha, \data_n}(\signal_n) \leq m_\star + \alpha \reg(\signal_\star)$. Since $\reg$ is coercive it follows that $(\signal_n)_{n\in \N}$ is a bounded sequence and hence it has a weakly convergent subsequence.

Let $(\signal_{\tau(n)})_{n \in \N}$ be a weakly convergent subsequence of $(\signal_n)_{n \in \N}$ and denote its limit by $\signal^\ddagger$. By the lower semi-continuity we get
$\similarity(\Ko \signal^\ddagger, \data) \leq \liminf_{n \to \infty} \similarity(\Ko \signal_{\tau(n)}, \data_{\tau(n)}) $ and  $\reg(\signal^\ddagger) \leq \liminf_{n \to \infty} \reg(\signal_{\tau(n)})$.
Thus for all  $\signal \in \X$ with $\anett_{\alpha, \data}(\signal) < \infty$ we have
\begin{multline*}
 \anett_{\alpha, \data}(\signal^\ddagger)
 \leq
 \liminf_{n \to \infty}  \similarity(\Ko \signal_{\tau(n)}, \data_{\tau(n)}) + \alpha \liminf_{n \to \infty}  \reg(\signal_{\tau(n)})
 \\ \leq
 \limsup_{n \to \infty} \similarity(\Ko \signal_{\tau(n)}, \data_{\tau(n)}) + \alpha \reg(\signal_{\tau(n)})
  \leq \limsup_{n \to \infty} \anett_{\alpha, \data_{\tau(n)}}(\signal)   = \anett_{\alpha, \data}(\signal)  \,.
\end{multline*}
This shows that $\signal^\ddagger  \in \argmin \anett_{\alpha, \data}$ and, by considering $\signal = \signal^\ddagger$ in the  above displayed equation, that $ \anett_{\alpha, \data_{\tau(n)}}(\signal_{\tau(n)}) \to  \anett_{\alpha, \data}( \signal^\ddagger )$. Moreover, we have
\begin{multline*}
\limsup_{n \to \infty} \alpha \reg(\signal_{\tau(n)})  \leq \limsup_{n \to \infty} \anett_{\alpha, \data_{\tau(n)}}(\signal_{\tau(n)}) - \liminf_{n \to \infty} \similarity(\Ko \signal_{\tau(n)}, \data_{\tau(n)})
 \\ \leq \anett_{\alpha,\data}(\signal^\ddagger) - \similarity(\Ko \signal^\ddagger, \data)
 = \alpha \reg(\signal^\ddagger )   \,.
 \end{multline*}
 This shows $\reg(\signal_{\tau(n)}) \to  \reg( \signal^\ddagger )$ as $n \to \infty$ and concludes the proof.
\end{proof}

In the following we say that the similarity measure $\similarity$ satisfies the quasi triangle-inequality if there is some $q \geq 1$ such that
\begin{equation} \label{eq:triangle}
\forall \data_0, \data_1, \data_2 \in \Y \colon \quad \similarity(\data_0, \data_1) \leq q \cdot \bigl(  \similarity(\data_0, \data_2) + \similarity(\data_2, \data_1) \bigr) \,.
\end{equation}
While this property is essential for deriving convergence rate results, we will show below that it is not enough to guarantee stability of minimizing the augmented NETT  functional in the sense of Theorem~\ref{thm:stability}.  Note  that \cite{li2020nett} assumes the  quasi triangle-inequality \eqref{eq:triangle} instead of Condition~\ref{b4}. The following remarks shows that \eqref{eq:triangle}  is not sufficient for the  stability result of Theorem~\ref{thm:stability} to hold and  therefore Condition~\ref{b4} has to be added to the list of assumptions in \cite{li2020nett} required for the stability.

\begin{example}[Instability in the absence of Condition~\ref{b4}]
Consider the similarity measure $\similarity \colon \Y \times \Y   \to [0, \infty]$ defined by
\begin{equation} \label{eq:counterexample}
\similarity(\data_0, \data_1) \coloneqq H( \data_1) \,  \norm{\data_0 - \data_1}^2 \,,
\end{equation}
where $H \colon \Y \to [0,1]$ is defined by $H( \data_1)  = 1$  if $\norm{\data_1} \leq 1$ and $H( \data_1)  = 2$  otherwise.
Moreover, choose $\X = \Y$, let $\Ko = \mathrm{Id}$ be the identity operator and suppose the  regularizer takes  the form $\reg  = \norm{\edot}^2$.

\begin{itemize}[wide,topsep=0em, itemsep=0em]
\item The similarity measure defined in \eqref{eq:counterexample} satisfies  \ref{b1}-\ref{b3}:  Convergence with respect to $\similarity$ is equivalent to convergence in norm which implies  that \ref{b3} is satisfied. Moreover, we have  $\forall \data_0, \data_1\colon  \similarity(\data_0, \data_1) = 0 \Leftrightarrow \data_0 = \data_1$, which is \ref{b1}. Consider sequences $z_n \to z$ and  $\data_n \rightharpoonup  \data$. The sequential lower semi-continuity stated in  \ref{b2} can be derived by separately looking at the cases $\norm{z} \leq 1$ and $\norm{z} > 1$. In the first case, by the  continuity of the norm we have  $\forall n \in \N \colon H(z) \leq H(z_n)$. In the second case, we have $\norm{\data_n} > 1$ for $n$ sufficiently large. In both cases, the  lower semi-continuity property follows from the weak lower semi-continuity property of the norm.

\item The similarity measure defined by  \eqref{eq:counterexample} does not satisfy  \ref{b4}: To see this, we define $\data_n \coloneqq (1 + \delta_n) \data$ where $\norm{\data} = 1$ and $\delta_n>0$ is taken as a non-increasing sequence converging to zero. We have $\data_n \to \data$ and hence also $\similarity(\data, \data_n) \to 0$ as $n \to \infty$. For any $z \in \Y$ we have  $\similarity(z, \data) < \infty$ and $\similarity(z, \data_n) = 2 \norm{z - \data_n}^2 \rightarrow 2 \norm{z - \data}^2 = 2  \similarity( z, \data)$ as $n \to \infty$.  In particular, $\similarity(z, \data_n)$ does  not converge to $\similarity( z, \data)$ if $z \neq \data$ and therefore \ref{b5} does not hold. In summary, all  requirement for Theorem \ref{thm:stability} are satisfied, except  of the continuity assumption \ref{b4}.

\item
We have  $ \norm{\data_0 - \data_1}^2 \leq \similarity(\data_0, \data_1) \leq 2\norm{\data_0 - \data_1}^2$ which implies that the similarity measure satisfies the quasi triangle-inequality \eqref{eq:triangle}.
However as shown next, this is not  sufficient for  stable reconstruction in the sense of Theorem~\ref{thm:stability}. To that end, let  $\data_n \coloneqq (1 + \delta_n) \data$ with $\norm{\data} = 1$ and $\delta_n \downarrow 0$ be  as above and let $\alpha > 0$. In particular, $\similarity( \signal , \data_n ) = 2 \norm{\signal - \data_n}^2$ and $\similarity( \signal , \data ) =  \norm{\signal - \data_n}^2$. Therefore the minimizer  of $ \argmin \similarity( \signal , \data_n ) + \alpha \norm{\signal}^2$ with perturbed data $\data_n$ is given by $ \signal_n =  \data_n/(1 + \alpha/2)$ and the minimizer of   $\argmin \similarity( \signal , \data_n ) + \alpha \norm{\signal}^2$ for data $\data$ is given by $\signal^\ddagger =  \data/(1 + \alpha)$. We see that $ \signal_n  \to \data / (1 + \alpha/2)$, which is clearly different from $\signal^\ddagger$.
In particular, minimizing $\similarity( \edot , \data ) + \alpha \norm{\edot}^2$  does not stably depend on data $\data$. Theorem  \ref{thm:stability} states that stability holds if  \ref{b4} is satisfied.
\end{itemize}
\end{example}

While the above example may seem  somehow constructed, it shows that one has to be careful when choosing the similarity measure in order to obtain a stable reconstruction scheme.

\subsection{Convergence} \label{subsec:convergence}

In this subsection  we prove that  minimizers of the aNETT functional   for noisy data  convergence to  $\reg$-minimizing solution of the equation $\Ko \signal = \data$  as the noise level goes to zero and the regularization parameter is chosen properly.  Here and below, we use the following notation.

\begin{definition}[$\reg$-minimizing solutions]
For $\data \in \Y$, we call an element $\signal^\ddagger \in \dom (\reg)$  an $\reg$-minimizing solution of the equation $\Ko \signal = \data$ if
\begin{equation*}
 \signal^\ddagger \in \argmin \{ \reg(\signal) \colon \signal \in \dom(\reg)  \wedge  \Ko \signal = \data\} \,.
\end{equation*}
\end{definition}
An $\reg$-minimizing solution always exists provided that  data  satisfies $\data \in \Ko (\dom (\reg))$, which means that the equation $\Ko \signal = \data$ has at least on solution with finite value of $\reg$.  To see this, consider a sequence of solutions $(\signal_n)_{n \in \N}$ with $\reg (\signal_n) \rightarrow \inf \{ \reg(\signal) \colon \Ko \signal = \data  \wedge \reg(\signal) < \infty \}$. Since $\reg$ is coercive there exists a weakly  convergent subsequence $(\signal_{\tau(n)})_{n \in \N}$ with weak limit $\signal^\ddagger$. Using the weak sequential  lower semi-continuity of $\reg$ one concludes  that $\signal^\ddagger$ is  an $\reg$-minimizing solution.

We first show weak convergence.

\begin{theorem}[Weak convergence of aNETT] \label{thm:weakconvergence}
Suppose  Conditions~\ref{cond:a} and \ref{assumption:b} are satisfied. Let $\data \in \Ko (\dom (\reg))$, $(\delta_n)_{n\in \N} \in (0, \infty)^\N$ with $\delta_n \rightarrow 0$ and let $(\data_n)_{n\in \N}\in \Y^\N$ satisfy $\similarity(\data, \data_n) \leq \delta_n$. Choose $\alpha_n>0$ such that $\lim_{n \to \infty} \delta_n / \alpha_n = \lim_{n \to \infty} \alpha_n = 0$ and let $\signal_n \in \argmin \anett_{\alpha_n, \data_n}$. Then the following hold:
\begin{enumerate}[label=(\alph*), topsep=0em, itemsep=0em]
\item\label{weak-a} $(\signal_n)_{n\in \N}$ has at least one weakly convergent subsequence.
\item\label{weak-b} All accumulation points of $(\signal_n)_{n\in \N}$ are  $\reg$-minimizing solutions of $\Ko \signal = \data$.
\item\label{weak-c} For every convergent subsequence $(\signal_{\tau(n)})_{n \in \N}$ it holds $\reg(\signal_{\tau(n)}) \rightarrow \reg(\signal^\ddagger)$.
\item\label{weak-d} If the $\reg$-minimizing solution $\signal^\ddagger$ is unique then $\signal_n \rightharpoonup \signal^\ddagger$.
\end{enumerate}
\end{theorem}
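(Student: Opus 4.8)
The plan is to establish the four claims in sequence, leveraging the coercivity of $\reg$ (Theorem~\ref{thm:coercivity}) and the semicontinuity structure already used in the proof of Theorem~\ref{thm:stability}. The essential preliminary step is a uniform bound on $\reg(\signal_n)$. To obtain it, fix an $\reg$-minimizing solution $\signal^\ddagger$ of $\Ko \signal = \data$, which exists since $\data \in \Ko(\dom(\reg))$. By minimality of $\signal_n$ for $\anett_{\alpha_n, \data_n}$ and the fact that $\similarity(\Ko \signal^\ddagger, \data) = \similarity(\data, \data) = 0$, I would write
\begin{equation*}
\similarity(\Ko \signal_n, \data_n) + \alpha_n \reg(\signal_n) \leq \similarity(\Ko \signal^\ddagger, \data_n) + \alpha_n \reg(\signal^\ddagger) \leq \delta_n + \alpha_n \reg(\signal^\ddagger),
\end{equation*}
where the last inequality uses $\similarity(\data, \data_n) \leq \delta_n$ together with \ref{b4} (or directly the hypothesis bounding $\similarity(\data,\data_n)$). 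Dividing the regularizer part by $\alpha_n$ gives $\reg(\signal_n) \leq \delta_n/\alpha_n + \reg(\signal^\ddagger)$, which is bounded since $\delta_n/\alpha_n \to 0$, and simultaneously $\similarity(\Ko \signal_n, \data_n) \leq \delta_n + \alpha_n \reg(\signal^\ddagger) \to 0$.

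For claim~\ref{weak-a}, the bound on $\reg(\signal_n)$ combined with coercivity of $\reg$ forces $(\signal_n)$ to be norm-bounded, so by reflexivity of the Hilbert space $\X$ it has a weakly convergent subsequence. For claim~\ref{weak-b}, take any weakly convergent subsequence $\signal_{\tau(n)} \rightharpoonup \signal^\ddagger$. Weak sequential continuity of $\Ko$ gives $\Ko \signal_{\tau(n)} \rightharpoonup \Ko \signal^\ddagger$, and since $\similarity(\Ko \signal_{\tau(n)}, \data_{\tau(n)}) \to 0$ while $\similarity(\data, \data_{\tau(n)}) \to 0$, I would use the weak-norm lower semicontinuity \ref{b2} to conclude $\similarity(\Ko \signal^\ddagger, \data) = 0$, hence $\Ko \signal^\ddagger = \data$ by \ref{b1}. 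To see that $\signal^\ddagger$ is $\reg$-minimizing, weak lower semicontinuity of $\reg$ (from \ref{a4}--\ref{a6}) together with the uniform bound yields $\reg(\signal^\ddagger) \leq \liminf \reg(\signal_{\tau(n)}) \leq \reg(\signal^\ddagger_{\mathrm{opt}})$ for any competing minimizing solution, where the last step follows by comparing against the minimizing bound derived above.

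For claim~\ref{weak-c}, I would show $\reg(\signal_{\tau(n)}) \to \reg(\signal^\ddagger)$ by sandwiching: lower semicontinuity gives $\reg(\signal^\ddagger) \leq \liminf \reg(\signal_{\tau(n)})$, while the minimality estimate $\reg(\signal_{\tau(n)}) \leq \delta_{\tau(n)}/\alpha_{\tau(n)} + \reg(\signal^\ddagger)$ gives $\limsup \reg(\signal_{\tau(n)}) \leq \reg(\signal^\ddagger)$, so the two coincide. Finally, claim~\ref{weak-d} is the standard subsequence argument: if the $\reg$-minimizing solution is unique, then every weakly convergent subsequence of $(\signal_n)$ must converge to that same $\signal^\ddagger$ by \ref{weak-b}; since $(\signal_n)$ is bounded, a routine contradiction argument (any subsequence has a further weakly convergent subsequence, all with the same limit) upgrades this to weak convergence of the full sequence. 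The main obstacle I anticipate is the careful handling of the similarity measure in claim~\ref{weak-b}: one must pass to the limit in $\similarity(\Ko \signal_{\tau(n)}, \data_{\tau(n)})$ where \emph{both} arguments vary, so the weak-first/norm-second semicontinuity of \ref{b2} must be combined correctly with the convergence $\data_{\tau(n)} \to \data$ (which \ref{b3} provides from $\similarity(\data, \data_{\tau(n)}) \to 0$) to land at $\similarity(\Ko \signal^\ddagger, \data)$ rather than a spurious limit.
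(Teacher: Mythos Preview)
Your proposal is correct and follows essentially the same route as the paper: derive the minimality bound $\reg(\signal_n) \leq \delta_n/\alpha_n + \reg(\signal^\ddagger)$ from $\Ko \signal^\ddagger = \data$, invoke coercivity of $\reg$ for \ref{weak-a}, combine the weak--norm lower semicontinuity \ref{b2} (with $\data_{\tau(n)} \to \data$ supplied by \ref{b3}) and weak lower semicontinuity of $\reg$ for \ref{weak-b}, sandwich for \ref{weak-c}, and use the standard subsequence argument for \ref{weak-d}. The only cosmetic point is that you overload the symbol $\signal^\ddagger$ for both the initially fixed $\reg$-minimizing solution and an arbitrary weak accumulation point; since all $\reg$-minimizing solutions share the same $\reg$-value this does not affect the logic, but distinguishing them (as the paper does with $\signal_\star$) would make the argument cleaner.
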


\begin{proof}
\ref{weak-a}: Because  $\data \in \Ko (\dom (\reg))$, there exists an  $\reg$-minimizing solution of the equation $\Ko \signal = \data$ which we denote by $\signal^\ddagger$. Because $\signal_n \in \argmin \anett_{\alpha_n, \data_n}$ we have
\begin{multline} \label{conv:aux}
\alpha_n \reg(\signal_n)  \leq \similarity(\Ko \signal_n, \data_n) + \alpha_n \reg(\signal_n)
 \leq \similarity(\Ko \signal^\ddagger, \data_n) + \alpha_n \reg(\signal^\ddagger) \\
 = \similarity(\data, \data_n) + \alpha_n \reg(\signal^\ddagger)
 \leq \delta_n + \alpha_n \reg(\signal^\ddagger) \,.
\end{multline}
Because  $\alpha_n, \delta_n \to 0$ this shows that $(\reg(\signal_n))_{n \in \N}$ is bounded. Due to the coercivity of the aNETT regularizer (see Theorem \ref{thm:coercivity}), this implies that $(\signal_n)_{n\in \N}$ has a weakly convergent subsequence.

\ref{weak-b}, \ref{weak-c}: Let $(\signal_{\tau(n)})_{n \in \N}$ be a weakly convergent subsequence of $(\signal_n)_{n\in \N}$  with limit  $\signal_\star$.  From the weak lower semi-continuity  we get $\similarity(\Ko \signal_\star, \data)  \leq \liminf_{n \to \infty} \similarity(\Ko \signal_{\tau(n)}, \data_{\tau(n)}) + \alpha_{\tau(n)} \reg(\signal_{\tau(n)})
 \leq \liminf_{n \to \infty} \similarity(\Ko \signal^\ddagger, \data_{\tau(n)}) + \alpha_{\tau(n)}  \reg(\signal^\ddagger)   = 0$,
which shows that $\signal_\star$ is a solution of $\Ko \signal = \data$. Moreover,
\begin{equation*}
 	\reg (\signal^\ddagger)  \leq \reg (\signal_\star) \leq \liminf_{n \to \infty} \reg(\signal_{\tau(n)})  \leq \liminf_{n \to \infty}  \frac{\delta_{\tau(n)}}{\alpha_{\tau(n)}} + \reg(\signal^\ddagger) = \reg(\signal^\ddagger)
\end{equation*}
where for the second last inequality we used \eqref{conv:aux} and for the last equality we used that $\delta_n/\alpha_n \to  0$. Therefore,   $\signal_\star$ is an $\reg$-minimizing solution of the equation $\Ko \signal = \data$. In a similar manner we derive $\reg (\signal_\star)  \leq \liminf_{n \to \infty} \reg (\signal_{\tau(n)}) \leq  \limsup_{n \to \infty}  \delta_{\tau(n)} / \alpha_{\tau(n)}  + \reg(\signal^\ddagger) = \reg (\signal_\star) $ which shows $\reg (\signal_{\tau(n)}) \rightarrow \reg (\signal_\star)$.

\ref{weak-d}: If $\Ko \signal = \data$ has a unique $\reg$-minimizing solution $\signal^\ddagger$, then every subsequence of $(\signal_n)_{n\in \N}$ has itself a subsequence weakly converging to $\signal^\ddagger$, which implies that $(\signal_n)_{n\in \N}$ weakly converges to the $\reg$-minimizing solution.
\end{proof}

Next we derive strong convergence of the regularized solutions. To this end we recall the absolute Bregman distance, the modulus of total nonlinearity and the total nonlinearity, defined in \cite{li2020nett}.

\begin{definition}[Absolute Bregman distance]
Let $\mathcal{F} \colon \X \rightarrow [0, \infty]$ be Gâteaux differentiable at $\signal_\star \in \X$. The absolute Bregman distance $\Delta_\mathcal{F}(\edot,  \signal_\star) \colon \X \rightarrow [0, \infty]$ at $\signal_\star$ with respect to $\mathcal{F}$ is defined by
\begin{align*}
\forall \signal \in \X  \colon \quad \Delta_\mathcal{F}(\signal, \signal_\star) \coloneqq \bigl|  \mathcal{F}(\signal) - \mathcal{F}(\signal_\star) - \mathcal{F}'(\signal)(\signal - \signal_\star)  \bigr| \,.
\end{align*}
Here and below $\mathcal{F}'(\signal_\star)$ denotes the Gâteaux derivative of $\mathcal{F}$ at $\signal_\star$.
\end{definition}

\begin{definition}[Modulus of total nonlinearity and total nonlinearity]
Let $\mathcal{F} \colon \X \rightarrow  [0, \infty]$ be Gâteaux differentiable at $\signal_\star \in \X$. We define the modulus of total nonlinearity of $\mathcal{F}$ at $\signal_\star$ as $\nu_\mathcal{F}(\signal_\star, \edot) \colon [0, \infty) \rightarrow [0, \infty) \colon t \mapsto  \inf \{ \Delta_\mathcal{F}(\signal, \signal_\star) \colon \norm{\signal - \signal_\star} = t \}$. We call $\mathcal{F}$ totally nonlinear at $\signal_\star$ if $\nu_\mathcal{F}(\signal_\star, t) > 0$ for all $t \in (0, \infty)$.
\end{definition}

Using these definitions we get the following convergence result in the norm topology.

\begin{theorem}[Strong convergence  of aNETT] \label{thm:strong}
Let Conditions~\ref{cond:a} and \ref{assumption:b} hold, $\data \in \Ko (\dom (\reg))$ and let $\reg$ be totally nonlinear at all $\reg$-minimizing solutions of $\Ko \signal = \data$. Let $(\data_n)_{n\in \N}, (\signal_n)_{n\in \N}, (\alpha_n)_{n\in \N}$ be  as in Theorem~\ref{thm:weakconvergence}. Then there is a subsequence $(\signal_{\tau(n)})_{n \in \N}$ which converges in norm to an $\reg$-minimizing solution $\signal^\ddagger$ of $\Ko \signal = \data$. If the $\reg$-minimizing solution is unique, then $\signal_n \to \signal^\ddagger$ as $n \to \infty$.
\end{theorem}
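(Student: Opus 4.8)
The plan is to upgrade the weak convergence already furnished by Theorem~\ref{thm:weakconvergence} to norm convergence, using the total nonlinearity of $\reg$ together with the absolute Bregman distance as the bridge between the weak and the norm topology. Throughout I write $\signal^\ddagger$ for an $\reg$-minimizing solution of $\Ko \signal = \data$, whose existence is guaranteed by $\data \in \Ko(\dom(\reg))$ and the coercivity from Theorem~\ref{thm:coercivity}.

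First I would invoke Theorem~\ref{thm:weakconvergence}, parts~\ref{weak-a}--\ref{weak-c}, to extract a subsequence $(\signal_{\tau(n)})_{n \in \N}$ with $\signal_{\tau(n)} \rightharpoonup \signal^\ddagger$ for some $\reg$-minimizing solution $\signal^\ddagger$, and with the crucial value convergence $\reg(\signal_{\tau(n)}) \to \reg(\signal^\ddagger)$. Because $\reg$ is totally nonlinear at $\signal^\ddagger$, it is in particular G\^ateaux differentiable there, so the absolute Bregman distance $\Delta_\reg(\signal_{\tau(n)}, \signal^\ddagger)$ is well defined. The key step is to show that it tends to zero. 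Writing
\[
\Delta_\reg(\signal_{\tau(n)}, \signal^\ddagger) = \bigl| \reg(\signal_{\tau(n)}) - \reg(\signal^\ddagger) - \reg'(\signal^\ddagger)(\signal_{\tau(n)} - \signal^\ddagger) \bigr| \,,
\]
the difference of the first two terms vanishes in the limit by the value convergence above, while the linear term $\reg'(\signal^\ddagger)(\signal_{\tau(n)} - \signal^\ddagger)$ vanishes because $\reg'(\signal^\ddagger)$ is a fixed continuous linear functional and $\signal_{\tau(n)} - \signal^\ddagger \rightharpoonup 0$. Hence $\Delta_\reg(\signal_{\tau(n)}, \signal^\ddagger) \to 0$.

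Second, I would convert $\Delta_\reg(\signal_{\tau(n)}, \signal^\ddagger) \to 0$ into norm convergence by means of the modulus of total nonlinearity. Setting $t_n \coloneqq \norm{\signal_{\tau(n)} - \signal^\ddagger}$, the definition of $\nu_\reg$ yields $\Delta_\reg(\signal_{\tau(n)}, \signal^\ddagger) \geq \nu_\reg(\signal^\ddagger, t_n)$, so $\nu_\reg(\signal^\ddagger, t_n) \to 0$. Since a weakly convergent sequence is bounded, $(t_n)$ is bounded. The implication $t_n \to 0$ then follows from the pointwise positivity $\nu_\reg(\signal^\ddagger, t) > 0$ for $t > 0$ combined with the scaling property $\nu_\reg(\signal^\ddagger, c t) \geq c\, \nu_\reg(\signal^\ddagger, t)$ for $c \geq 1$ of the modulus (as established in \cite{li2020nett}): if $t_n \geq \epsilon > 0$ held along a subsequence with $t_n$ bounded, then $\nu_\reg(\signal^\ddagger, t_n) \geq \nu_\reg(\signal^\ddagger, \epsilon) > 0$, contradicting $\nu_\reg(\signal^\ddagger, t_n) \to 0$. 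Thus $\signal_{\tau(n)} \to \signal^\ddagger$ in norm.

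Finally, for the uniqueness statement I would run a standard subsequence-of-subsequence argument: if the $\reg$-minimizing solution $\signal^\ddagger$ is unique, then every subsequence of $(\signal_n)_{n\in\N}$ again satisfies the hypotheses of Theorem~\ref{thm:weakconvergence}, so by the argument above it has a further subsequence converging in norm to an $\reg$-minimizing solution, which must equal $\signal^\ddagger$ by uniqueness. A sequence all of whose subsequences admit a sub-subsequence converging to a common limit converges to that limit, giving $\signal_n \to \signal^\ddagger$. I expect the main obstacle to be precisely the passage $\nu_\reg(\signal^\ddagger, t_n) \to 0 \Rightarrow t_n \to 0$: pointwise positivity of the modulus alone is insufficient, and one genuinely needs the quantitative scaling/superadditivity property of $\nu_\reg$, together with the boundedness of $(t_n)$ coming from weak convergence, to exclude the modulus decaying to zero along radii bounded away from the origin.
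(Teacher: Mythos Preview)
Your proposal is correct and follows essentially the same approach as the paper's proof. The paper is somewhat terser: it cites \cite[Proposition~2.9]{li2020nett} as a black box for the implication ``bounded sequence with $\Delta_\reg(z_n,z)\to 0$ implies $z_n\to z$'', whereas you unpack that implication via the modulus $\nu_\reg$ and its scaling property, and you make explicit that the linear term $\reg'(\signal^\ddagger)(\signal_{\tau(n)}-\signal^\ddagger)$ vanishes by weak convergence (a step the paper leaves implicit when it writes ``by the definition of the absolute Bregman distance it follows that $\Delta_\reg(\signal_{\tau(n)},\signal^\ddagger)\to 0$'').
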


\begin{proof}
In \cite[Proposition 2.9]{li2020nett} it is shown that the total nonlinearity of $\reg$ implies that for every bounded sequence $(z_n)_{n \in \N}$ with  $\Delta_{\reg}(z_n, z) \rightarrow 0$ it holds that $z_n  \to z$.  Theorem~\ref{thm:weakconvergence} gives us a weakly converging subsequence $(\signal_{\tau(n)})_{n \in \N}$ of $ (\signal_n)_{n \in \N}$  with weak limit  $\signal^\ddagger$ and $\reg(\signal_{\tau(n)}) \to \reg(\signal^\ddagger)$.  By the definition of the absolute Bregman distance it follows that $\Delta_\reg(\signal_{\tau(n)}, \signal^\ddagger) \rightarrow 0$ and hence, together with \cite[Proposition 2.9]{li2020nett}, that  $\signal_{\tau(n)} \rightarrow \signal^\ddagger$. If the $\reg$-minimizing solution of  $\Ko \signal = \data$ is unique, then every subsequence has a subsequence converging to $\signal^\ddagger$  and hence the claim follows.
\end{proof}

\subsection{Convergence rates} \label{ssec:rates}

We will now prove convergence rates by deriving quantitative estimates for the absolute Bregman distance between  $\reg$-minimizing solutions for exact data and regularized solutions for noisy data.
The convergence rates will be derived under the additional assumption that $\similarity$ satisfies the quasi triangle-inequality \eqref{eq:triangle}.

\begin{proposition}[Convergence rates for aNETT] \label{prop:rate}
Let the assumptions of Theorem~\ref{thm:strong} be satisfied and suppose  that $\similarity$ satisfies the quasi triangle-inequality \eqref{eq:triangle} for some $q \geq 1$. Let $\signal^\ddagger \in \X$ be an $\reg$-minimizing solution of $\Ko \signal = \data$ such that $\reg$ is  Gâteaux differentiable at $\signal^\ddagger$ and assume there exist $\epsilon, c > 0$ with
\begin{equation}\label{eq:rate}
\forall \signal \in \X \colon  \quad \norm{\signal - \signal^\ddagger} \leq \epsilon \Rightarrow \Delta_{\reg}(\signal, \signal^\ddagger)
\leq \reg(\signal) - \reg(\signal^\ddagger) + c \sqrt{ \similarity(\Ko \signal, \Ko \signal^\ddagger) }\,.
\end{equation}
For any $\delta >0$, let $\data^\delta \in \Y$ be noisy data satisfying $\similarity(\data^\delta, \Ko \signal^\ddagger)^{1/2} \leq \delta$, $ \similarity(\Ko \signal^\ddagger, \data^\delta)^{1/2} \leq \delta$, and write $\signal_\alpha^\delta  \in  \argmin \anett_{\alpha, \data^\delta }$. Then the following hold:
\begin{enumerate}[label=(\alph*), topsep=0em, itemsep=0em]
\item \label{est-a} For sufficiently small $\alpha$, it holds $\Delta_{\reg}(\signal_\alpha^\delta, \signal^\ddagger)   \leq  \delta^2/\alpha - c \delta  \sqrt{q} + c^2 \alpha q /4$.
\item\label{est-b} If $\alpha \asymp \delta$, then  $\Delta_{\reg}(\signal_\alpha^\delta, \signal^\ddagger) = \mathcal{O} \bigl( \delta \bigr)$ as $\delta \to 0$.
\end{enumerate}
\end{proposition}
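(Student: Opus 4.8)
The plan is to follow the classical route for variational convergence rates: play the minimizing property of $\signalalpha$ against the variational source condition \eqref{eq:rate}, and then eliminate the unknown residual by a Young-type estimate. Throughout I write $s \coloneqq \similarity(\Ko\signalalpha, \data^\delta)$ for the data fidelity at the regularized solution.

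First I would secure applicability of \eqref{eq:rate}. Since \eqref{eq:rate} is only assumed on the ball $\norm{\signal - \signal^\ddagger} \le \epsilon$, one must know that $\signalalpha$ lies in this ball before invoking it at $\signal = \signalalpha$. This is exactly where Theorem~\ref{thm:strong} enters: under its hypotheses the regularized solutions converge in norm to $\signal^\ddagger$ in the regime $\delta \to 0$, $\alpha \to 0$, $\delta^2/\alpha \to 0$ — which is precisely the scaling $\alpha \asymp \delta$ of part~(b) — so that for sufficiently small parameters $\norm{\signalalpha - \signal^\ddagger} \le \epsilon$ and \eqref{eq:rate} becomes available. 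This localization is the meaning of ``for sufficiently small $\alpha$'' and is, to my mind, the only genuinely delicate step.

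Next I would use minimality. Testing $\anett_{\alpha, \data^\delta}(\signalalpha) \le \anett_{\alpha, \data^\delta}(\signal^\ddagger)$ with the bound $\similarity(\Ko\signal^\ddagger, \data^\delta) \le \delta^2$ gives $\reg(\signalalpha) - \reg(\signal^\ddagger) \le (\delta^2 - s)/\alpha$. Substituting into \eqref{eq:rate} yields $\Delta_{\reg}(\signalalpha, \signal^\ddagger) \le (\delta^2 - s)/\alpha + c\,\sqrt{\similarity(\Ko\signalalpha, \Ko\signal^\ddagger)}$. The cross term is then controlled by the quasi triangle-inequality \eqref{eq:triangle} with intermediate point $\data^\delta$ together with $\similarity(\data^\delta, \Ko\signal^\ddagger) \le \delta^2$, which gives $\similarity(\Ko\signalalpha, \Ko\signal^\ddagger) \le q(s + \delta^2)$ and hence $\sqrt{\similarity(\Ko\signalalpha, \Ko\signal^\ddagger)} \le \sqrt{q}\,(\sqrt{s} + \delta)$ by subadditivity of the square root.

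At this point the estimate depends on the unknown $s$ only through $-s/\alpha + c\sqrt q\,\sqrt s$, a concave function of $\sqrt s$. Completing the square — equivalently using the Young inequality $c\sqrt q\,\sqrt s \le s/\alpha + c^2 q \alpha / 4$ — cancels the $-s/\alpha$ term and leaves a bound consisting of $\delta^2/\alpha + c^2 q\alpha/4$ together with the $\mathcal O(\delta)$ correction $c\sqrt q\,\delta$, i.e. a perfect square $\bigl(\delta/\sqrt\alpha \pm c\sqrt{q\alpha}/2\bigr)^2$ as in part~(a). Part~(b) is then immediate: with $\alpha \asymp \delta$ every summand is $\mathcal O(\delta)$, so $\Delta_{\reg}(\signalalpha, \signal^\ddagger) = \mathcal O(\delta)$. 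Apart from the localization discussed above, all steps are routine, the final one being merely a balancing of the residual $s$ against the regularization term.
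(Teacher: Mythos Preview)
Your argument is correct and essentially identical to the paper's: localize via Theorem~\ref{thm:strong}, combine minimality of $\signalalpha$ with \eqref{eq:rate}, control the cross term $\sqrt{\similarity(\Ko\signalalpha,\Ko\signal^\ddagger)}$ via the quasi triangle-inequality with intermediate point $\data^\delta$, and absorb the unknown residual $s$ through AM--GM/Young. Your derivation naturally yields the $+$ sign in the $c\sqrt q\,\delta$ term (as does the paper's own computation; the $-$ in the displayed bound of part~(a) is a typo there), so your hedging with $\pm$ is well placed and in any case irrelevant for part~(b).
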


\begin{proof}
By definition of $\signal_\alpha^\delta$ we have $\similarity(\Ko \signal_\alpha^\delta, \data^\delta) + \alpha \reg(\signal_\alpha^\delta) - \alpha \reg(\signal^\ddagger) \leq \similarity(\Ko \signal^\ddagger, \data^\delta) \leq \delta^2$. By Theorem~\ref{thm:strong} for sufficiently small $\alpha$ we can assume that $\norm{\signal_\alpha^\delta - \signal^\ddagger} \leq \epsilon$   and hence
\begin{align*}
\alpha \Delta_\reg(\signal_\alpha^\delta, \signal^\ddagger)  & \leq \alpha \reg(\signal_\alpha^\delta) - \alpha \reg(\signal^\ddagger) + c \alpha \sqrt{\similarity(\Ko \signal_\alpha^\delta, \Ko \signal^\ddagger)  } \\
&=\reg(\signal_\alpha^\delta)  - \similarity(\Ko \signal_\alpha^\delta, \data^\delta) - \Bigl(  \reg(\signal_\alpha^\delta)  - \similarity(\Ko \signal_\alpha^\delta, \data^\delta) \Bigr)+ c \alpha\sqrt{\similarity(\Ko \signal_\alpha^\delta, \Ko \signal^\ddagger)  }
\\
& \leq \delta^2   - \similarity(\Ko \signal_\alpha^\delta, \data^\delta)  +  c \alpha  \delta \sqrt{ q } +  c \alpha \sqrt{ q  \similarity(\Ko \signal_\alpha^\delta, \data^\delta)} \,.
\end{align*}
Together with the  inequality of arithmetic and geometric means $(a + b) / 2 \geq \sqrt{ab}$  for  $ a =   \similarity(\Ko \signal_\alpha^\delta, \data^\delta)$ and $b = c^2 \alpha^2 q /4$ this implies
$\alpha \Delta_\reg(\signal_\alpha^\delta, \signal^\ddagger)  \leq \delta ^2 - c \alpha   \delta \sqrt{q} + c^2 \alpha^2 q /4$ which shows  \ref{est-a}.  Item \ref{est-b} is an immediate consequence of \ref{est-a}.
\end{proof}

The following results is our main  convergence rates result. It is  similar to Proposition \cite[Theorem~3.1]{li2020nett}, but uses different assumptions.

\begin{theorem}[Convergence rates for finite rank operators]
Let the assumptions of Theorem~\ref{thm:strong} be satisfied, take $\similarity (\data_1, \data_2) = \norm{\data_1 - \data_2}^2$, assume  $\Ko$ has finite dimensional range and that $\reg$ is Lipschitz continuous and Gâteaux differentiable.    For any $\delta >0$, let $\data^\delta \in \Y$ be noisy data satisfying $\norm{\data^\delta- \Ko \signal^\ddagger} \leq \delta$ and write $\signal_\alpha^\delta  \in  \argmin \anett_{\alpha, \data^\delta }$.
Then for the parameter choice  $\alpha \asymp \delta$ we have  the convergence rates result $\Delta_{\reg}(\signal_\alpha^\delta, \signal^\ddagger) = \mathcal{O} \bigl( \delta \bigr)$ as $\delta \to 0$.
\end{theorem}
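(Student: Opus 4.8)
The plan is to reduce everything to Proposition~\ref{prop:rate} and then invoke item~\ref{est-b}. Three hypotheses must be checked. Since $\similarity(\data_0,\data_1)=\norm{\data_0-\data_1}^2$, the quasi triangle-inequality \eqref{eq:triangle} holds with $q=2$, because $\norm{\data_0-\data_1}^2\le 2\norm{\data_0-\data_2}^2+2\norm{\data_2-\data_1}^2$. The noise conditions of Proposition~\ref{prop:rate} reduce to $\norm{\data^\delta-\Ko\signal^\ddagger}\le\delta$, which is exactly the stated assumption. The real work is to verify the variational inequality \eqref{eq:rate}, i.e.\ to produce $\epsilon,c>0$ with $\Delta_{\reg}(\signal,\signal^\ddagger)\le\reg(\signal)-\reg(\signal^\ddagger)+c\,\norm{\Ko(\signal-\signal^\ddagger)}$, recalling that $\sqrt{\similarity(\Ko\signal,\Ko\signal^\ddagger)}=\norm{\Ko(\signal-\signal^\ddagger)}$ and that the absolute Bregman distance here reads $\Delta_{\reg}(\signal,\signal^\ddagger)=\bigl|\reg(\signal)-\reg(\signal^\ddagger)-\reg'(\signal^\ddagger)(\signal-\signal^\ddagger)\bigr|$ (with the Gâteaux derivative taken at the reference point $\signal^\ddagger$).

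First I would extract a source condition from $\reg$-minimality. Since $\signal^\ddagger$ minimizes $\reg$ over the affine set $\signal^\ddagger+\ker\Ko$, for each $h\in\ker\Ko$ the scalar map $t\mapsto\reg(\signal^\ddagger+th)$ is minimal at $t=0$, so Gâteaux differentiability gives $\reg'(\signal^\ddagger)(h)=0$; hence $\reg'(\signal^\ddagger)\in(\ker\Ko)^{\perp}$. Here the finite-dimensionality of $\ran\Ko$ enters decisively: $\ran(\Ko^*)$ is then finite-dimensional, hence closed, so $(\ker\Ko)^{\perp}=\ran(\Ko^*)$ and there exists $\omega\in\Y$ with $\reg'(\signal^\ddagger)=\Ko^*\omega$. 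This yields $|\reg'(\signal^\ddagger)(\signal-\signal^\ddagger)|=|\innerprod{\omega,\Ko(\signal-\signal^\ddagger)}|\le\norm{\omega}\,\norm{\Ko(\signal-\signal^\ddagger)}$.

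Second I would bound the regularizer gap from below. Writing $h=\signal-\signal^\ddagger=h_N+h_{N^\perp}$ along $\X=\ker\Ko\oplus(\ker\Ko)^{\perp}$, the finite-dimensionality of $(\ker\Ko)^{\perp}$ together with injectivity of $\Ko$ restricted there produces $\gamma>0$ with $\norm{h_{N^\perp}}\le\gamma^{-1}\norm{\Ko h_{N^\perp}}=\gamma^{-1}\norm{\Ko h}$, using $\Ko h_N=0$. The point $\signal':=\signal^\ddagger+h_N$ satisfies $\Ko\signal'=\data$, so $\reg(\signal')\ge\reg(\signal^\ddagger)$ by minimality; combined with the Lipschitz estimate $|\reg(\signal)-\reg(\signal')|\le L\norm{\signal-\signal'}=L\norm{h_{N^\perp}}$ (with $L$ the Lipschitz constant of $\reg$), this gives $\reg(\signal)-\reg(\signal^\ddagger)\ge -L\gamma^{-1}\norm{\Ko h}$.

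Finally, setting $D:=\reg(\signal)-\reg(\signal^\ddagger)$ and $R:=\norm{\Ko(\signal-\signal^\ddagger)}$, the two bounds above read $|\reg'(\signal^\ddagger)(\signal-\signal^\ddagger)|\le\norm{\omega}R$ and $D\ge -L\gamma^{-1}R$. Splitting into the cases $D\ge 0$ and $D<0$ (where $|D|\le L\gamma^{-1}R$) then gives $\Delta_{\reg}(\signal,\signal^\ddagger)\le |D|+\norm{\omega}R\le D+cR$ with $c:=2L\gamma^{-1}+\norm{\omega}$, so \eqref{eq:rate} holds, in fact for every $\signal$ and hence for any $\epsilon$. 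Proposition~\ref{prop:rate}\ref{est-b} with $\alpha\asymp\delta$ then delivers $\Delta_{\reg}(\signal_\alpha^\delta,\signal^\ddagger)=\mathcal{O}(\delta)$. The main obstacle is controlling the sign of $D$: because $\reg$ is nonconvex and $\signal^\ddagger$ only minimizes $\reg$ on $\signal^\ddagger+\ker\Ko$, the gap $\reg(\signal)-\reg(\signal^\ddagger)$ may be negative, and keeping the right-hand side of \eqref{eq:rate} nonnegative is precisely what forces the finite-rank injectivity estimate of the second step — which, together with closedness of $\ran(\Ko^*)$ in the first step, is the sole place where finite dimensionality of the range is used.
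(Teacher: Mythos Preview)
Your proposal is correct and follows essentially the same route as the paper: both reduce to Proposition~\ref{prop:rate}, verify \eqref{eq:rate} by decomposing $\signal-\signal^\ddagger$ along $\ker\Ko\oplus(\ker\Ko)^\perp$, use the finite-rank injectivity bound $\norm{\Ko z}\ge a\norm{z}$ on $(\ker\Ko)^\perp$, and combine $\reg$-minimality with Lipschitz continuity to control $\reg(\signal)-\reg(\signal^\ddagger)$ from below. The only cosmetic difference is that you phrase the derivative estimate via an explicit source condition $\reg'(\signal^\ddagger)=\Ko^*\omega$ (using closedness of $\ran\Ko^*$), while the paper bounds $\lvert\innerprod{\reg'(\signal^\ddagger),\signal-\signal^\ddagger}\rvert$ directly from the same decomposition and obtains $c=(2L+\norm{\reg'(\signal^\ddagger)})/a$; your auxiliary point $\signal'=\signal^\ddagger+h_N$ is exactly the paper's $\signal_0$.
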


\begin{proof}
According to Proposition \ref{prop:rate}, it is sufficient to show that \eqref{eq:rate} holds with $ \norm{\Ko \signal -\Ko \signal^\ddagger}$ in place of  $\similarity(\Ko \signal, \Ko \signal^\ddagger)^{1/2} $. 
For that purpose, let  $\mathbf{P}$ denote the orthogonal projection onto the null-space $\ker(\Ko)$ and let $L$ be a Lipschitz constant of $\reg$. Since $\Ko$ restricted to $\ker(\Ko)^\bot$ is injective with finite dimensional range, we can choose a constant $a >0$ such that  $\forall z \in \ker(\Ko)^\bot \colon \norm{\Ko z}  \geq a \norm{z}$.  

We first show the estimates 
\begin{align} \label{eq:linh1}
&	\forall \signal \in \X \colon \quad \reg(\signal^\ddagger)  - \reg(\signal) \leq (L/ a)   \, \norm{\Ko \signal - \Ko \signal^\ddagger}  
\\ \label{eq:linh2}
&	\forall \signal \in \X \colon \quad \abs{ \innerprod{\reg'(\signal^\ddagger), \signal - \signal^\ddagger} } \leq   ( \norm{\reg'(\signal^\ddagger)} / a) \,  \norm{\Ko \signal- \Ko \signal^\ddagger}
	\,.
\end{align} 
To that end, let  $\signal \in \X$ and write $\signal_0 \coloneqq (\signal^\ddagger - \mathbf{P} \signal^\ddagger) + \mathbf{P} \signal$. Then $\Ko \signal_0 = \Ko \signal^\ddagger$. Since $\signal^\ddagger$ is an $\reg$-minimizing solution, we have $ \reg(\signal^\ddagger) - \reg(\signal)  \leq  \reg(\signal_0) - \reg(\signal) \leq L \norm{\signal_0 - \signal}$.  Since $\signal_0 - \signal \in \ker(\Ko)^\bot$, we have  $\norm{\Ko \signal^\ddagger - \Ko \signal} = \norm{\Ko (\signal_0 -  \signal)} \geq a \norm{\signal_0 - \signal}$. The last two estimates prove  \eqref{eq:linh1}. Because $\signal^\ddagger$ is an $\reg$-minimizing solution, we have $  \innerprod{\reg'(\signal^\ddagger), \signal^\ddagger - \signal} =  0$ whenever $\signal^\ddagger - \signal \in \ker(\Ko)$. On the other hand, using that $\reg$ is Gâteaux differentiable and that  $\Ko$ has finite rank, shows $\vert \innerprod{\reg'(\signal^\ddagger), \signal^\ddagger - \signal} \rvert \leq \norm{\reg'(\signal^\ddagger)} \, a^{-1} \norm{\Ko (\signal^\ddagger -  \signal)}$ for $\signal^\ddagger - \signal \in \ker(\Ko)^\bot$. This proves  \eqref{eq:linh2}. 

Inequality \eqref{eq:linh1} implies $\abs{\reg(\signal) - \reg(\signal^\ddagger)} \leq  \reg(\signal) - \reg(\signal^\ddagger)  + 2 \, (L /  a)   \, \norm{\Ko \signal - \Ko \signal^\ddagger} $. Together with \eqref{eq:linh2} this yields
\begin{align*}
	 \Delta_\reg(\signal, \signal^\ddagger) 
	 &= \bigl\lvert \reg(\signal) - \reg(\signal^\ddagger)  - \innerprod{\reg'(\signal^\ddagger), \signal - \signal^\ddagger} \bigr\rvert 
	 \\&
	 \leq \abs{\reg(\signal) - \reg(\signal^\ddagger)} + \abs{\innerprod{\reg'(\signal^\ddagger), \signal^\ddagger - \signal}} 
	  \\&\leq \reg(\signal) - \reg(\signal^\ddagger) + (2 L +\norm{\reg'(\signal^\ddagger)} ) \, a^{-1} \,  \norm{\Ko \signal^\ddagger - \Ko \signal} \,,
\end{align*}
which proves  \eqref{eq:rate}  with  $c = (2 L +\norm{\reg'(\signal^\ddagger)} ) / a$.
\end{proof}

Note that the theoretical results stated remain valid, if we replace $\reg$ by a general coercive and weakly lower semi-continuous regularizer $\reg \colon \X \rightarrow [0, \infty]$.

\section{Practical realization} 
\label{sec:realization}

In this section we investigate practical aspects of aNETT. We present a possible network architecture together with a possible  training strategy in the  discrete setting. Further we discuss minimization of aNETT using the ADMM algorithm.

For the sake of clarity we restrict our discussion to the finite dimensional case where $\X = \R^{N \times N}$ and $\ell^2(\Lambda) = \R^\Lambda$ for  a finite index set $\Lambda$.

\subsection{Proposed modular aNETT training}

To find a suitable network $\decoder \circ \encoder$ defining  the aNETT regularizer $\reg (\signal) = \pen(\encoder(\signal)) + (\aval/2) \, \norm{ \signal - \decoder \circ \encoder (\signal)}^2$, we  propose a modular data driven approach that comes in two separate steps. In a first step, we train a $\pen$-regularized denoising autoencoder $\decoder^\pen \circ \encoder$ independent of the forward problem $\Ko$, whose purpose is to  well represent elements of a training data set by low complexity encoder coefficients.  In a second step, we train a task-specific network   that increases the ability of the  aNETT regularizer to  distinguish between clean images and images containing problem specific artifacts.

Let   $\signal_1, \dots, \signal_m  \in \M  $ denote the given set of artifact-free training phantoms.

\begin{itemize}[wide, topsep=0em, itemsep=0em]
\item\textsc{$\pen$-regularized autoencoder:}

First, an  autoencoder $\decoder^\pen  \circ \encoder$ is trained such that  $\signal_i$ is close to  $\decoder^\pen \circ \encoder ( \signal_i) $ and  that $\pen(\encoder(\signal_i))$ is small for the given training signals.
For that purpose, let  $(\decoder_\theta  \circ \encoder_\theta)_{\theta \in \Theta}$ be a  family of autoencoder networks, where $\encoder_\theta \colon \R^{N \times N} \to \R^\Lambda$ are encoder and $\decoder_\theta \colon \R^\Lambda \to \R^{N \times N}$  decoder networks, respectively.

To achieve that unperturbed images are sparsely represented by $\encoder$, whereas disrupted images are not, we apply the following training strategy. We randomly generate images   $\signal_i + a_i  \epsilon_i $  where  $\epsilon_i$  is additive Gaussian white noise with a standard deviation proportional to the mean value of $\signal_i$, and $a_i \in \{ 0,1 \}$ is a binary random variable that takes each value with probability $0.5$. For the numerical results below we use a standard deviation of 0.05 times the mean value of $\signal_i$. To select the particular autoencoder based on the training data, we  consider the following  training strategy 
\begin{equation}\label{eq:AEtrain}
    \theta^*  \in \argmin_\theta \frac{1}{m} \sum_{i=1}^m \norm{(\decoder_\theta \circ \encoder_\theta) (\signal_i + a_i \epsilon_i) -  \signal_i}_2^2  + \nu (1-a_i) \pen(\encoder_\theta(\signal_i))  + \beta  \norm{\theta}_2^2 \,,
\end{equation}
and set  $[\decoder^\pen, \encoder] \coloneqq [\decoder_{\theta^*}, \encoder_{\theta^*}]$. Here $\nu, \beta  > 0$  are regularization parameters.

Including perturbed signals $\signal_i + \epsilon_i$ in  \eqref{eq:AEtrain}  increases  robustness of the $\pen$-regularized autoencoder.  To enforce regularity for the encoder coefficients  only on the noise-free images, the penalty $ \pen(\encoder(  \edot )) $ is  only used for the noise-free inputs, reflected by the pre-factor $1-a_i$. Using auto-encoders, regularity for a signal class could also be achieved by means of dimensionality reduction techniques, where $\R^\Lambda$ is used as a bottleneck in the network architecture. However, in order to get a regularizer that is able to distinguish between perturbed and undisturbed perturbed signals we use $\R^\Lambda$ to be of sufficiently high dimensionality.

\item\textsc{Task-specific network:}

Numerical simulations showed that the  $\pen$-regularized autoencoder alone was not able to sufficiently well distinguish  between artifact-free training phantoms and  images containing problem specific artifacts.
In order to address this issue,  we compose the operator independent network with another network $\Uo$, that is  trained  to distinguish between image with and without problem specific artifacts.

For that purpose, we consider  randomly generated images  $z_1, \dots, z_m$ where either  $z_i = (\decoder^\pen \circ \encoder)  (\signal_i)$ or $z_i =  (\decoder^\pen \circ \encoder)( \Ko^{\ddagger} ( \Ko \signal_i + \eta_i))$  with equal probability. Here $\Ko^{\ddagger}$ is an approximate right inverse and $\eta_i$ are  error terms.
We choose a network architecture $( \Uo_\theta)_{\theta \in \Theta}$ and select $\Uo = \Uo_{\theta^*}$,  where
\begin{equation}\label{eq:U}
\theta^* \in \argmin \frac{1}{m}\sum_{i=1}^{m} \norm{\Uo_\theta   (z_i) - \signal_i}_2^2  +  \gamma \norm{\theta}_2^2 \,,
\end{equation}
for some regularization  parameter $\gamma > 0$.
In particular, the image  residuals $(\decoder^\pen \circ \encoder) (\Ko^{\ddagger} ( \Ko \signal_i + \eta _i)) - (\decoder^\pen \circ \encoder)( \signal_i)$ now depend on the specific inverse problem and we can consider them to consist of operator and training signal   specific artifacts.

The above training procedure  ensures that the network $\Uo$ adapts to the inverse problem at hand as well as to the $\pen$-regularized autoencoder.  Training the network $\Uo$ independently of $\decoder^\pen \circ \encoder$, or directly training the auto-encoder  to distinguish between images with and without problem specific artifacts,  we empirically found to perform considerably  worse.
\end{itemize}

The final autoencoder is then given as $\auto = \decoder \circ \encoder$ with modular decoder  $\decoder \coloneqq \Uo \circ \decoder^\pen$.   For the numerical results we take $( \Uo_\theta)_{\theta \in \Theta}$ as the  tight frame U-Net of \cite{han2018framing}. Moreover, we choose $(\decoder_\theta  \circ \encoder_\theta)_{\theta \in \Theta}$ as modified tight frame U-Net proposed in \cite{obmann2021deep} for deep synthesis regularization. In particular, opposed to the original tight frame U-net, the modified tight frame U-Net does not involve skip connections.

\subsection{Possible aNETT minimization}

For minimizing the aNETT functional  \eqref{eq:anett} we use the  alternating direction method of multiplies (ADMM) with scaled dual variable \cite{boyd2011distributed,gabay1976dual,glowinski75sur}.  For that purpose, the aNETT minimization problem is rewritten as the following constraint minimization problem
\begin{equation*}
 \left\{
   \begin{aligned}
    &\operatorname{arg\,min}_{\signal, \xi} && \similarity(\Ko \signal, \data^\delta) + \alpha \pen(\xi) + \alpha  \aval /2 \,  \norm{\signal - \auto(\signal)}^2
    \\
    &\operatorname{s.t.} && \encoder(\signal) = \xi \,.
    \end{aligned}
\right.
\end{equation*}
The resulting ADMM update scheme with  scaling parameter $\rho>0$ initialized by $\xi_0 = \encoder(\Ko^\ddagger \data^\delta )$ and $\eta_0 = 0$ then reads as follows:   

\begin{framed}
\begin{enumerate}[label=(S\arabic*), topsep=0.5em, itemsep=0.5em,leftmargin=3em]
\item\label{admm1} $ \signal_{k+1} = \argmin_\signal \similarity(\Ko \signal, \data^\delta) + (\alpha \aval / 2 ) \, \norm{\signal -\auto (\signal)}_2^2  +(\rho/2) \, \norm{\encoder(\signal) - \xi_k +  \eta_k }_2^2 $.
\item  \label{admm2} $\xi_{k+1} = \argmin_\xi \alpha \pen(\xi) + (\rho/2) \, \norm{\encoder( \signal_{k+1}  ) - \xi + \eta_k }_2^2$.
\item\label{admm3}   $\eta_{k+1} = \eta_{k+1} + \bigl( \encoder( \signal_{k+1}) - \xi_{k+1} \bigr) $.
\end{enumerate}
\end{framed}

One interesting feature of the above approach is that the signal update  \ref{admm1}  is independent of the possibly non-smooth penalty $\pen$. Moreover, the encoder update \ref{admm2} uses the proximal mapping of $\pen$ which in important special cases can be evaluated explicitly and therefore fast and exact. Moreover, it guarantees regular encoder coefficients during each iteration. For example, if we choose the penalty as the $\ell^1$-norm, then \ref{admm2} is a soft-thresholding step which results in sparse encoder coefficients. 
Step \ref{admm1} in typical cases has to be computed iteratively via an inner iteration. To find an approximate solution for \ref{admm1} for the results presented below we use gradient descent with at most $10$ iterations. We stop the gradient descent updates early if the difference of the functional evaluated at two consecutive iterations is below our predefined tolerance of $10^{-5}$. 

The concrete implementation  of the aNETT minimization   requires specification of the similarity measure, the total number of outer  iterations $N_{\mathrm{iter}}$, the step-size $\gamma$ for the iteration in \ref{admm1} and the parameters defining the aNETT functional. These specifications are selected dependent  of the inverse problem at hand. Table \ref{tab:parameter} lists the particular choices for the reconstruction scenarios considered in the following section.

\begin{table}[htb]
\centering
\begin{tabular}{| l | l | l | l | l| l | l | l |}
\toprule
 & $\alpha$ & $\aval$ & $\gamma$ & $N_{\mathrm{iter}}$ & $N_\varphi$ & noise  model & $\similarity$  \\
\midrule
Sparse view & $10^{-4}$ & $10^2$ & $5 \cdot 10^{-1}$ & 50 & $40$ & Gaussian, $\sigma = 0.02$ & $\similarity_{\norm{\edot}^2}$ \\
Low dose & $5 \cdot 10^{-3}$ & $10^2$ & $10^{-3}$ & $20$ & 1138  & Poisson, $p = 10^4$ & $\similarity_{\rm KL} $\\
Universality & $10^{-4}$ & $10^2$ & $5 \cdot 10^{-1}$ &$50$ & 160  & Gaussian, $\sigma = 0.02$ & $\similarity_{\norm{\edot}^2}$  \\
\bottomrule
\end{tabular}
\caption{Parameter specifications for proposed aNETT functional and it numerical minimization.}
\label{tab:parameter}
\end{table}

The ADMM scheme for  aNETT minimization shares similarities with existing iterative neural network based reconstruction methods. In particular, ADMM inspired plug-and-play priors \cite{venkatakrishnan2013plug, chan2016plug, romano2017little} may be most closely related. However, opposed the plug and play approach we can deduce convergence from existing results for ADMM for non-convex problems  \cite{wang2019global}. While  convergence of \ref{admm1}-\ref{admm3} and relations with plug and play priors are  interesting  and relevant, they are  beyond the scope of this work. This also applies to the comparison with other iterative minimization schemes for minimizing aNETT.

\begin{figure}[htb!]
\centering
 \includegraphics[width=\textwidth]{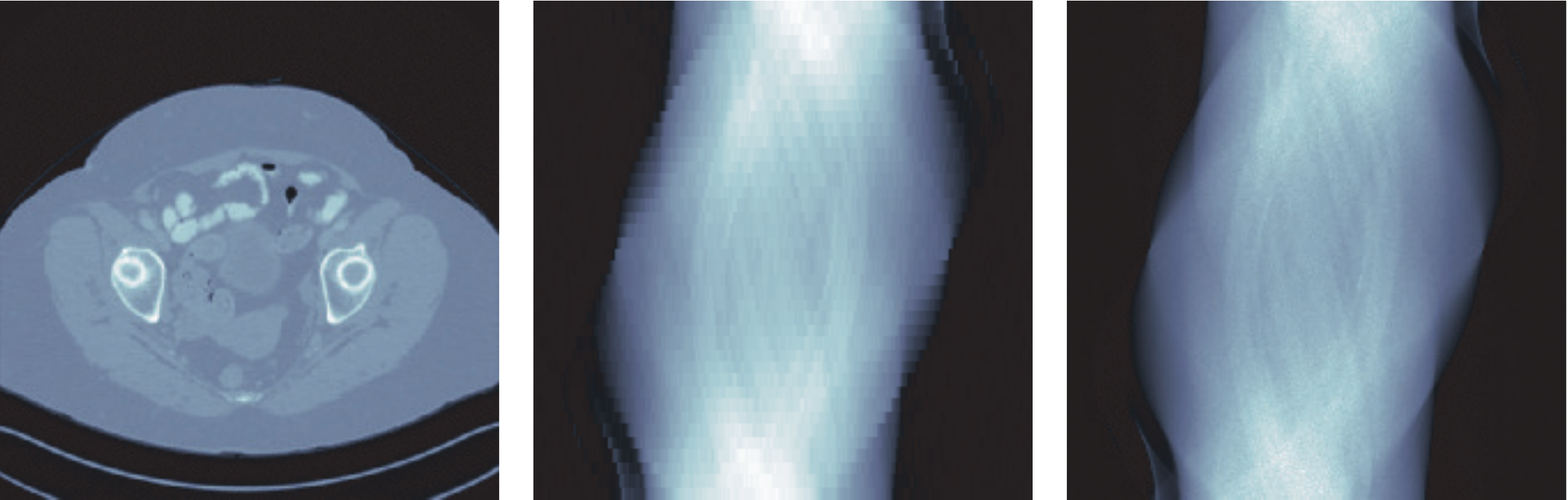}
  \caption{Left: Example image randomly drawn from the dataset. Middle:  Corresponding sparse view sinogram (40 directions). Right: low dose sinogram.}
  \label{fig:example}
\end{figure}

\section{Application to sparse view and low dose CT} \label{sec:application}

In this section we apply aNETT regularization to sparse view and low-dose computed tomography (CT). For the experiments we always choose $\pen$ to be the $\ell^1$-norm. The parameter specifications for the proposed aNETT functional and its numerical minimization are given in Table \ref{tab:parameter}. For quantitative evaluation, we use the peak-signal-to-noise-ratio (PSNR) defined  by
\begin{equation*}
    \operatorname{PSNR}(\signal, \signal_{\rm rec})  \coloneqq 20 \log_{10} \left( \frac{\max \signal}{\norm{\signal - \signal_{\rm rec}}_2} \right) \,.
\end{equation*}
Here $\signal \in \R^{N\times N}$  is the ground truth image and $\signal_{\rm rec}\in \R^{N\times N}$ its numerical reconstruction. Higher value of PSNR indicates better reconstruction.

\begin{table}[htb!]
\centering
\begin{tabular}{| l | l |  l | l | l | }
 \toprule
 PSNR & FBP & LPD   & Post & aNETT\\
\midrule
Sparse view & $23.8 \pm 1.3$ & $\mathbf{37.9 \pm 1.2}$   & $37.1 \pm 0.9$   & $37.1 \pm 1.0$   \\
\midrule
Low dose & $36.9 \pm 1.6$  & $43.6 \pm 1.3$  &   $\mathbf{44.1 \pm 1.5}$  & $43.9 \pm 1.3$\\
\midrule
Universality  & $32.4 \pm 1.6$  & \texttt{na} &  $37.7 \pm 0.8$ & $\mathbf{38.3 \pm 1.0}$\\
\bottomrule
\end{tabular}
\caption{Overview of metric results evaluated on the test-set. The values shown are the average of the PSNR $\pm$ the standard deviation calculated over the test dataset. The values in bold show the best results. The \texttt{na} entry means that LPD cannot be applied after changing the angular sampling pattern.} 
\label{tab:results}
\end{table}

\subsection{Discretization and dataset}

For  sparse view CT as well as for low dose CT we work with a discretization of the Radon transform 
$ (\Ro f)( \varphi, s  )  \coloneqq  \int_{\R} f( s \cos(\varphi) - t \sin(\varphi) , s\sin(\varphi)  +  t \cos(\varphi)  ) \mathrm{d} t$. The values  $(\Ro f)(\varphi, s )$ are integrals of the function  $f \colon \R^2 \to \R$  over lines  orthogonal to $(\cos(\varphi), \sin(\varphi))^\intercal$  for angle $\varphi \in [0, \pi)$ and signed distance $s \in \R$.    We discretize the Radon transform using the ODL library \cite{adler2017odl} where we assume that the function has compact support in $[-1,1]^2$ and sampled on an equidistant grid. We use $N_\varphi$ equidistant samples of  $\varphi \in [0, \pi)$ and $N_s$ equidistant samples of $s \in [-1.5, 1.5]$. In both cases, we end up with an inverse problem of the form  \eqref{eq:ip}, where $\Ko \colon \R^{N \times N} \to \R^{N_\varphi \times N_s} $ is the discretized linear forward operator. Elements $\signal  \in  \R^{N \times N}$ will be referred to as CT images and the elements  $\data \in \R^{N_\varphi \times N_s}$ as sinograms.

 or all results presented below we work  with image size $512 \times 512$  and use $N_s  =  768$. The number of angular samples $N_\varphi$ is  taken $40$ for low the dose CT and $N_\varphi = 1138$ for the low dose example. In both cases we use the CT images from the Low Dose CT Grand Challenge dataset \cite{mccollough2016tu} provided by the Mayo Clinic. The dataset consists of $512 \times 512$ grayscale images of $10$ different patients, where for  each patient there are multiple CT scanning series available. We use the split $7/2/1$ for training, validation and testing which corresponds to $4267/1143/526$ CT images in the respective sets.  We use the validation set to select networks which achieve the minimal loss on the validation set. The test set  is used to evaluate the final performance.  Note that by splitting the dataset according to patient we avoid validation and testing on images patients that  have already be seen during training time. An example image and the corresponding simulated  sparse view and low-dose data are shown in Figure~\ref{fig:example}.

\subsection{Numerical results}
\label{ssec:results}

We compare results of aNETT to the learned primal-dual algorithm (LPD) \cite{adler2018learned}, the tight frame U-Net  \cite{han2018framing} applied as post-processing network (CNN), and the filtered back-projection (FBP) as a base-line.  Minimization of the loss-function for all methods was done using Adam \cite{kingma2014adam} for $100$ epochs, cosine decay learning rate $\eta_t = (\eta_0/2) \cdot ( 1 + \cos (\pi  t/100) )$ with $\eta_0 = 10^{-3}$ in the $t$-th epoch, and a batch-size of $4$. For the LPD we take hyper-parameters $N_{\mathrm{primal}} = N_{\mathrm{dual}} = 5$ and $N = 7$ network iterations and train according to \cite{adler2018learned}.  For training of the tight frame U-Net we do not follow the patch approach of \cite{han2018framing} but instead use full images obtained with FBP as CNN inputs.
Training of all the networks was done on a GTX 1080 Ti with an Intel Xeon Bronze 3104 CPU.

\begin{figure*}[htb!]
\centering
\includegraphics[width=\textwidth]{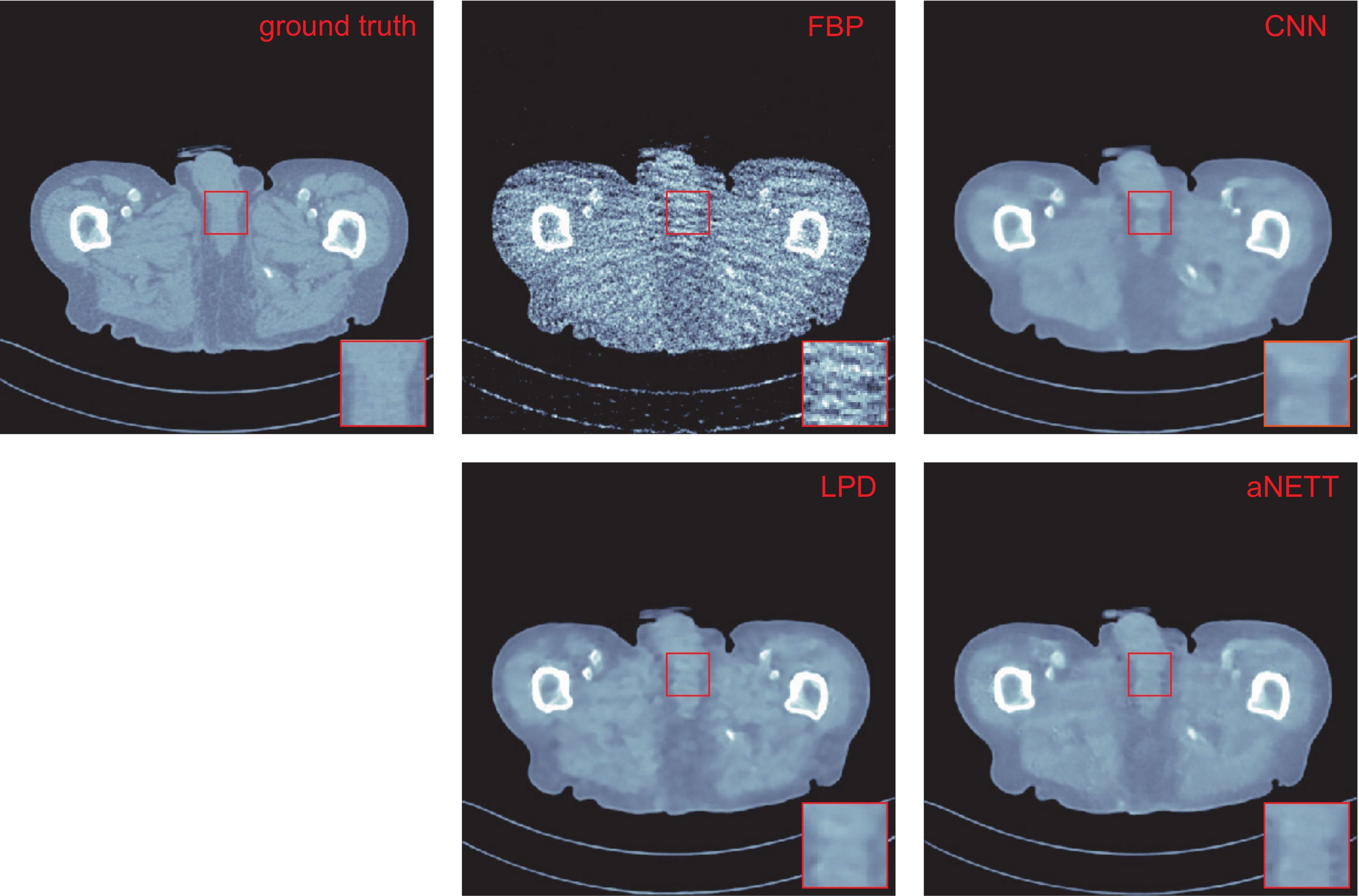}
\caption{Reconstructions for sparse view CT data from  $N_\varphi =40$ angular directions. The intensity range of all images is  $[-500,500]$ HU.}
  \label{fig:resultssparse}
\end{figure*}

\begin{itemize}[wide, topsep=0em, itemsep=0em]
\item \textsc{Sparse View CT:}
To simulate sparse view data we evaluate the  Radon transform for   $N_\varphi = 40$ directions. We generate noisy data $\data^\delta = \Ko \signal + \eta^\delta$ by adding Gaussian white   noise with standard deviation taken as  $0.02$ times the mean value of $\Ko \signal $. We use the $\ell^2$-norm distance as the similarity measure.  Quantitative results evaluated on the test set are shown in Table~\ref{tab:results}. All learning-based methods  yield comparable performance  in terms of PSNR and clearly outperform FBP. The reconstructions shown in Figure~\ref{fig:resultssparse} indicate that  aNETT reconstructions are  less smooth than CNN reconstructions and less blocky than  LPD reconstructions.

\begin{figure*}[htb!]
\centering
\includegraphics[width=\textwidth]{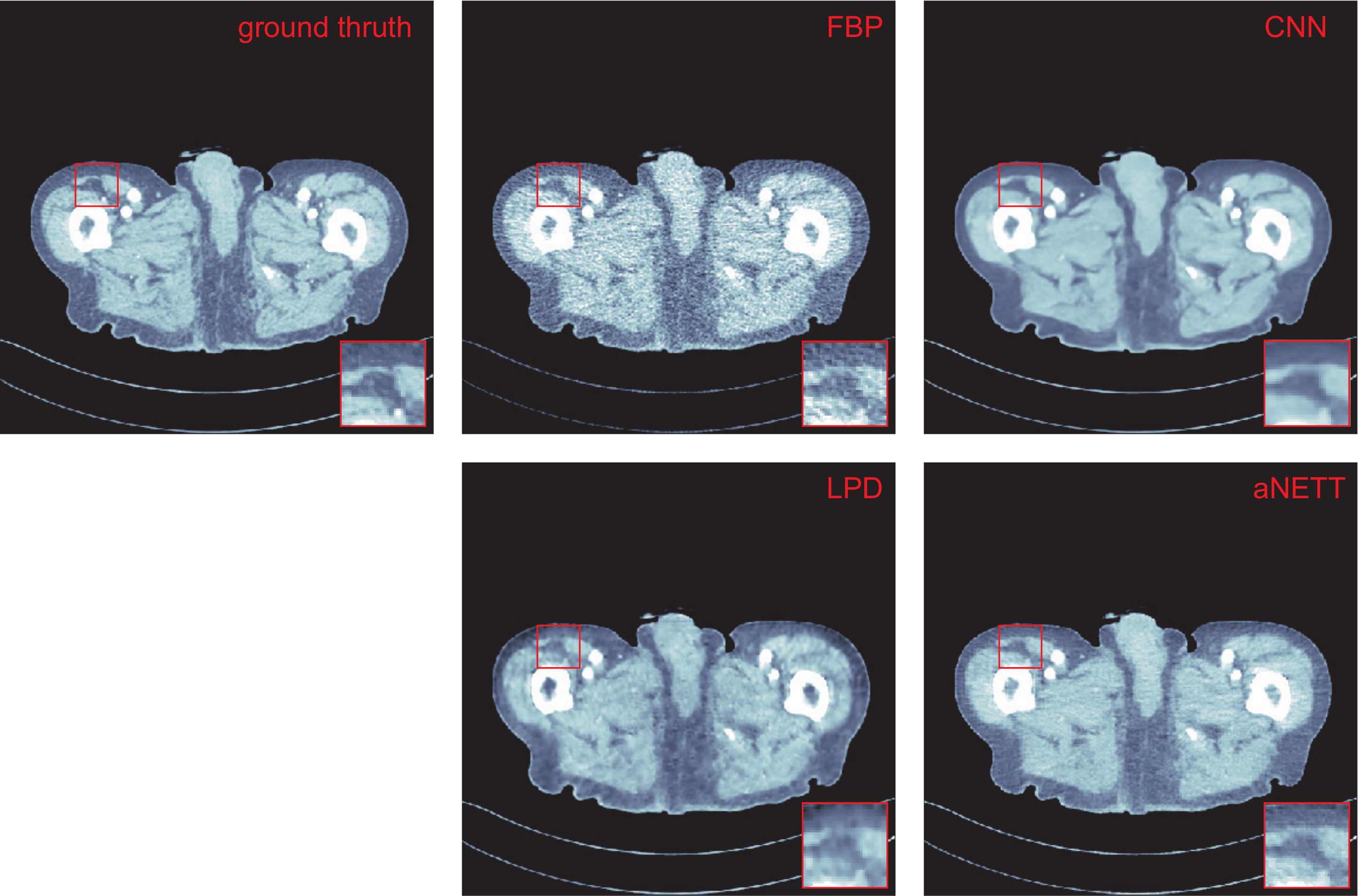}
  \caption{Reconstructions results from low dose CT data. The intensity range of all images is  $[-200,200]$ HU.}
  \label{fig:resultslow}
\end{figure*}

\item \textsc{Low Dose CT:}
For the low dose problem, we use a fully sampled sinogram with $N_\varphi = 1138$ and add Poisson noise corresponding to $10^4$ incident photons per pixel bin. The Kullback-Leibler divergence $\similarity_{\rm KL}$ is a more appropriate than the squared $\ell^2$-norm distance in case of Poisson noise and the reported values and reconstructions use the Kullback-Leibler divergence as the similarity measure. Quantitative results are shown in Table~\ref{tab:results}. Again,  all learning-based methods give similar results and significantly outperform FBP. Visual comparison of the reconstructions in Figure~\ref{fig:resultslow} shows that  CNN yields cartoon like images and the LPD reconstruction again looks blocky. The aNETT reconstruction shows more texture than the CNN reconstruction and at the same time is less blocky than the LPD  reconstruction.

\begin{figure*}[htb!]
\centering
\includegraphics[width=\textwidth]{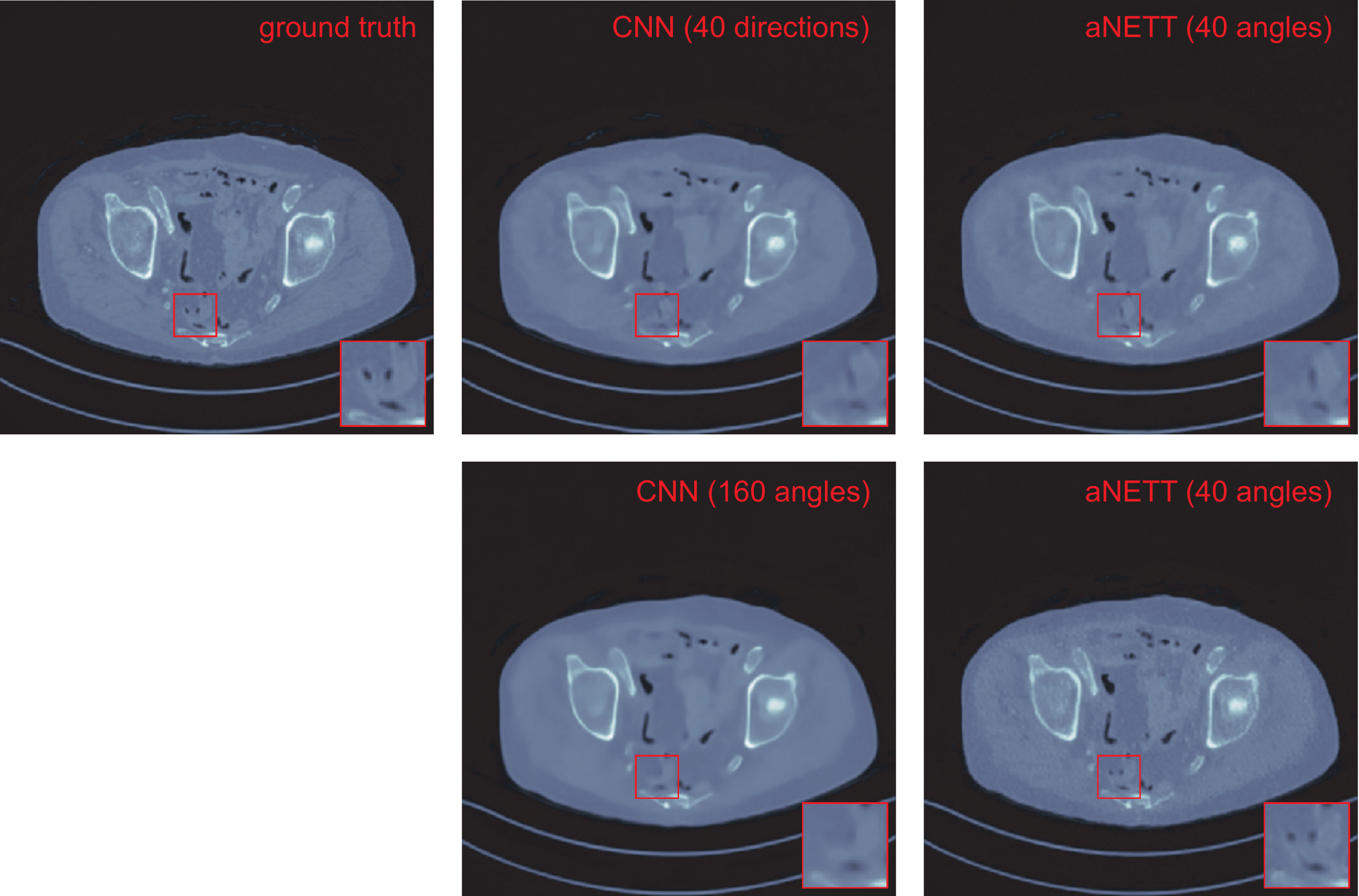}
\caption{Universality of aNETT due to change of angular sampling pattern. Top row: Ground truth and reconstructions from 40 angular directions. Bottom: Reconstructions from 160 angular directions. All reconstruction use the networks trained with 40 angular  directions. While aNETT shows increased resolution for ncreased angular sampling, CNN does not.}
\label{fig:adaptability}
\end{figure*}

\item \textsc{Universality:}
In practical applications, we may not have a fixed sampling pattern.  If we have many different sampling patterns, then training network for each sampling pattern is infeasible and hence reconstruction methods should be applicable  to different sampling scenarios. Additionally, it is desirable that an increased number of  samples indeed increases performance. In order to test this issue, we consider the sparse view CT problem but with an increased  number  of angular samples  without retraining the networks.  LPD is not applicable in this case, as the changing the forward operator changes the network architecture.   Quantitative evaluation for this scenario is given in Table~\ref{tab:results}. We see that aNETT are better  than CNN in terms of PSNR. The advantage of aNETT over CNN, however, is best observed in Figure~\ref{fig:adaptability}. One observes that  CNN  yields a similar reconstructions for both angular sampling patterns. On the other hand, aNETT is able to synergistically combine the increased  sampling rate of the sinogram with the network trained on coarsely sampled data. Despite using the network, aNETT with 160 angular samples reconstructs small details which are not present in the reconstruction from 40 angular samples.
\end{itemize}

\subsection{Discussion}

The results show that the proposed aNETT regularization is competitive with prominent deep-learning methods such as LPD and post-processing CNNs. We found that the aNETT does not suffer as much from over-smoothing which is often observed in other deep-learning reconstruction methods. This can for example be seen in Figure~\ref{fig:resultslow} where the CNN yields an over-smoothed reconstruction and the aNETT reconstruction shows more texture. Besides this, aNETT reconstructions are less blocky than LPD reconstructions.  Moreover, aNETT is able to leverage higher sampling rates to reconstruct small details while other deep-learning methods fail to do so. We conjecture that this is due the synergistic interplay of the aNETT regularizer with the data-consistency term in \eqref{eq:anett}.   
In some scenarios, it may not be possible to retrain  networks. Especially for learned iterative schemes  network training is  a time-consuming task. Training aNETT on the other hand is straightforward and, as demonstrated, yields a method which is robust to changes of the forward problem during testing time.

Finally, we note that aNETT relies on minimizing \eqref{eq:anett} iteratively.  With the use of the ADMM minimization scheme presented in this article, aNETT is slower than the methods used for comparison in this article. Designing faster optimization schemes for \eqref{eq:anett} is beyond the scope of this work, but is an important and interesting aspect.

\section{Conclusion} \label{sec:conclusion}

We have proposed the aNETT (augmented NETwork Tikhonov) for which derived coercivity of the regularizer under quite mild assumptions on the networks involved. Using this coercivity we presented a  convergence analysis of aNETT with a general similarity measure $\similarity$. We proposed a modular training strategy in which we first train an $\pen$-regularized autoencoder independent of the problem at hand and then a network which is adapted to the problem and first autoencoder. Experimentally we found this training strategy to be superior to directly training the autoencoder on the full task. Lastly, we conducted numerical simulations demonstrating the feasibility of aNETT.

The experiments show that aNETT is able to keep up with the classical post-processing CNNs and the learned primal-dual approach for to sparse view and low dose CT. Typical deep learning methods work well for a fixed sampling pattern on which they have been trained on. However, reconstruction methods are expected to perform better if we use an increased  sampling rate. We have experimentally shown that aNETT is able to leverage  higher sampling rates to reconstruct small details in the images which are not visible in the other reconstructions. This universality  can be advantageous in applications where one is not fixed to one sampling pattern or is not able to train a network for every sampling   pattern.

\section*{Acknowledgments}
D.O. and M.H.  acknowledge support of the Austrian Science Fund (FWF), project P 30747-N32.
The~research of L.N. has been supported by the National Science Foundation (NSF) Grants DMS 1212125 and DMS 1616904.

\end{document}